\theoremstyle{plain}
\newtheorem{lemma}{Lemma}
\newtheorem{theorem}{Theorem}
\numberwithin{equation}{section}
\begin{document}
\title[Nonlinear elliptic fourth order equations...]{Nonlinear elliptic
fourth order equations existence and multiplicity results}
\author{Mohammed Benalili}
\address{Faculty of Sciences, Dept. Math. University Aboubakr BelKa\"{\i}d.
Tlemcen Algeria.}
\email{m\_benalili@mail.univ-tlemcen.dz}
\author{Kamel Tahri}
\curraddr{Faculty of Sciences, Dept. Math. University Aboubakr BelKa\"{\i}d.
Tlemcen Algeria.}

\begin{abstract}
This paper deals with the existence of solutions to a class of fourth order
nonlinear elliptic equations. The technique used relies on critical points
theory. The solutions appeared as critical points of a functional restricted
to a suitable manifold.In the case of constant coefficients we obtain the
existence of tree distinct solutions.
\end{abstract}

\maketitle

\section{Introduction}

Let$\ (M,g)$ be a Riemannian compact smooth $n-$manifold, $n\geq 5$, with
metric $g$ and scalar curvature $S_{g}$, we let $H_{2}^{2}(M)$ be the
standard Sobolev space which is the completion of the space 
\begin{equation*}
C_{2}^{2}(M)=\left\{ u\in C^{\infty }(M)\text{: }\left\Vert u\right\Vert
_{2,2}<+\infty \right\}
\end{equation*}%
with respect to the norm $\left\Vert u\right\Vert
_{2,2}=\sum_{l=0}^{2}\left\Vert \nabla ^{l}u\right\Vert _{2}$.

In this paper, we investigate solutions of a class of fourth order elliptic
equations, on compact $n$-dimensional Riemannian manifolds, of the form 
\begin{equation}
\Delta ^{2}u+\nabla ^{i}(a(x)\nabla _{i}u)+b(x)u=f(x)\left\vert u\right\vert
^{N-2}u+\lambda \left\vert u\right\vert ^{q-2}u  \tag{1}  \label{1}
\end{equation}%
where $a$, $b$, and $f$ \ are smooth functions on $M$ , $N=\frac{2n}{n-4}$
is the critical exponent, $1<q<2$ \ a real number, $\lambda >0$ a real
parameter.

Consideration for such problem comes from conformal geometry: indeed, in
1983, Paneitz \cite{8} introduced a conformal fourth order operator defined
on $4$-dimensional Riemannian manifolds which was generalized by Branson 
\cite{3} to higher dimensions. 
\begin{equation*}
PB_{g}(u)=\Delta ^{2}u+div(-\frac{(n-2)^{2}+4}{2(n-1)(n-2)}S_{g}.g+\frac{4}{%
n-2}Ric)du+\frac{n-4}{2}Q^{n}u
\end{equation*}%
where $\Delta u=-div\left( \nabla u\right) $, $R$ is the scalar curvature, $%
Ric$ is the Ricci curvature of $g$ and where%
\begin{equation*}
Q^{n}=\frac{1}{2(n-1)}\Delta S_{g}+\frac{n^{3}-4n^{2}+16n-16}{%
8(n-1)^{2}(n-2)^{2}}S_{g}^{2}-\frac{2}{(n-2)^{2}}\left\vert Ric\right\vert
^{2}
\end{equation*}%
is associated to the notion of $Q$ -curvature.

We refer to a Paneitz-Branson type operator as an operator of the form%
\begin{equation*}
P_{g}u=\Delta ^{2}u+\nabla ^{i}(a(x)\nabla _{i}u)+h(x)u\text{.}
\end{equation*}

Equation (\ref{1}) is a perturbation of the equation 
\begin{equation}
\Delta ^{2}u+\nabla ^{i}(a(x)\nabla _{i}u)+h(x)u=f(x)\left\vert u\right\vert
^{N-2}u\text{.}  \tag{2}  \label{2}
\end{equation}%
Since the embedding $H_{2}^{2}\hookrightarrow H_{N}^{k}$ , ($\ k=0$, $1$)
fails to be compact, as known, one encounters serious difficulties in
solving equations like (\ref{1}).

Since 1990 many results have been established for precise functions $a$, $h$
and $f.$ D.E. Edmunds, D. Fortunato, E. Jannelli$\left( \text{\cite{8}}%
\right) $ proved for $n\geq 8$ that if $\lambda \in (0,\lambda _{1})$, with $%
\lambda _{1}$ is the first eigenvalue of $\Delta ^{2}$ on the euclidean open
ball $B$, the problem 
\begin{equation*}
\left\{ 
\begin{array}{c}
\Delta ^{2}u-\lambda u=u\left\vert u\right\vert ^{\frac{8}{n-4}}\text{ in }B
\\ 
u=\frac{\partial u}{\partial n}=0\text{ on }\partial B%
\end{array}%
\right.
\end{equation*}%
has a non trivial solution.

In 1995, R$.$ Van der Vorst (\cite{8}) obtained the same results as D.E.
Edmunds, D. Fortunato, E. Jannelli. when applied to the problem%
\begin{equation*}
\left\{ 
\begin{array}{c}
\Delta ^{2}u-\lambda u=u\left\vert u\right\vert ^{\frac{8}{n-4}}\text{ in }%
\Omega \\ 
u=\Delta u=0\text{ \ on \ }\partial \Omega%
\end{array}%
\right.
\end{equation*}%
where $\Omega $ is an open bounded set of $R^{n}$ and moreover he showed
that the solution is positive

In $\left( \text{\cite{7}}\right) $ D.Caraffa studied the equation%
\begin{equation}
\Delta ^{2}u+\nabla ^{i}(a(x)\nabla _{i}u)+b(x)u=f(x)\left\vert u\right\vert
^{N-2}u  \tag{3}  \label{3}
\end{equation}%
in the case $f(x)=$constant; and in the particular case where the functions $%
a(x)$ and $b(x)$ are precise constants she obtained the existence of
positive regular solutions.

Let $f$ \ be a $C^{\infty }$ function on $\ M$, $f^{-}=-\inf (f,0)$, $%
f^{+}=\sup (f,0)$ and for $a$,$b$, $\ f$ , $C^{\infty }$ -functions on $M$,
we let $\ \ $%
\begin{equation*}
\lambda _{a,f}=\underset{u\in A}{\inf }\frac{\int_{M}(\Delta
u)^{2}dv_{g}-\int_{M}a\left\vert \nabla u\right\vert ^{2}dv_{g}}{%
\int_{M}u^{2}dvg}
\end{equation*}%
where $A=\left\{ u\in H_{2}\text{, }u\geq 0\text{, }u\not\equiv 0\text{ s.
t. }\int_{M}f^{-}udv_{g}=0\right\} ,~\ $and%
\begin{equation*}
\lambda _{a,f}=+\infty \ \ \ \ \text{if\ \ \ \ \ \ \ \ \ }A=\phi \text{ }
\end{equation*}%
the second author establish the following:

\begin{theorem}
\cite{3} Let $a$, $h$ be $C^{\infty }$ functions on $M$ with $h$ negative.
For every $C^{\infty }$ function, $f$ on $M$ with $\int_{M}f^{-}dv_{g}>0$,
there exists a constant $C>0$ which depends only on $\frac{f^{-}}{\int
f^{-}dv_{g}}$ such that if $f$ satisfies the following conditions 
\begin{equation*}
\left. 
\begin{array}{l}
(1)\text{ }\left\vert h(x)\right\vert <\lambda _{a,f}\text{ \ \ \ \ \ \ \ \
\ \ \ \ \ \ \ \ \ for any }x\in M\text{\ \ \ \ \ \ \ \ \ \ \ \ \ \ \ \ \ \ \
\ \ \ \ \ \ \ \ \ \ \ \ \ \ \ \ \ \ \ \ \ \ \ \ \ \ \ \ \ \ \ \ \ \ \ \ \ \
\ \ \ \ \ \ \ \ \ \ \ \ \ \ \ \ } \\ 
(2)\text{ }\frac{\sup f^{+}}{\int f^{-}dv_{g}}<C \\ 
(3)\text{ }\sup_{M}f>0,%
\end{array}%
\right.
\end{equation*}%
then the subcritical equation 
\begin{equation*}
\Delta ^{2}u+\nabla ^{i}(a\nabla _{i}u)+hu=f\left\vert u\right\vert ^{q-2}u,%
\text{ \ \ \ \ }q\in \left] 2,N\right[
\end{equation*}%
has at least two distinct solutions $u$ and $v$ satisfying $F_{q}\left(
u\right) <0<F_{q}\left( v\right) $ and of class $C^{4,\alpha }$, for some $%
\alpha \in (0,1)$.
\end{theorem}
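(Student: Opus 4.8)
The plan is to find two critical points of the energy functional by exploiting the subcritical nature of the problem and the sign structure imposed by conditions (1)–(3).

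Concretely, weak solutions are the critical points on $H_2^2(M)$ of $F_q(u)=\tfrac12 Q(u)-\tfrac1q B(u)$, where $Q(u)=\int_M(\Delta u)^2\,dv_g-\int_M a|\nabla u|^2\,dv_g+\int_M hu^2\,dv_g$ and $B(u)=\int_M f|u|^q\,dv_g$; since $q<N$ the embedding $H_2^2\hookrightarrow L^q$ is compact, restoring the compactness that the critical equation (\ref{2}) lacks. First I would carry out the fibering analysis of the maps $t\mapsto F_q(tu)$ and split the Nehari manifold $\mathcal N=\{u\neq 0:\ Q(u)=B(u)\}$ into $\mathcal N^{\pm}$ according to the sign of $g_u''(1)=(2-q)Q(u)$. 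Because $F_q=(\tfrac12-\tfrac1q)Q$ on $\mathcal N$, one gets $F_q>0$ on $\mathcal N^-$ (where $Q>0$) and $F_q<0$ on $\mathcal N^+$ (where $Q<0$); the two sought solutions $v$ and $u$ will be produced as minimizers of $F_q$ on $\mathcal N^-$ and $\mathcal N^+$ respectively, their opposite energy signs yielding $F_q(u)<0<F_q(v)$ and in particular $u\neq v$.

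Next I would verify that both pieces are nonempty. Condition $(3)$ provides a direction concentrated where $f>0$, hence a function with $B>0$; together with the restricted coercivity of $Q$ coming from condition $(1)$, projecting along the fiber lands such a function in $\mathcal N^-$. Crucially, $\lambda_{a,f}$ is only the infimum over the restricted class $A$ of functions vanishing where $f<0$, so the hypothesis $|h|<\lambda_{a,f}$ does \emph{not} force $Q$ to be positive definite on all of $H_2^2$; one can therefore build a test function concentrated where $f<0$ with $Q<0$ and $B<0$, whose fiber meets $\mathcal N^+$.

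On each component $F_q$ is bounded below, and minimizing sequences are bounded in $H_2^2$ by the coercivity of $Q$ on the relevant cone, controlled by conditions $(1)$--$(2)$. Compactness of $H_2^2\hookrightarrow L^q$ then yields a weakly convergent subsequence whose limit attains the infimum. A Lagrange-multiplier computation on $\mathcal N^{\pm}$ shows the multiplier vanishes, so each constrained minimizer is in fact a free critical point of $F_q$, i.e. a weak solution; elliptic regularity for the fourth-order operator $P_g$, bootstrapped from the subcritical right-hand side, then upgrades both to $C^{4,\alpha}$.

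The main difficulty is keeping the two families apart: one must prevent minimizing sequences from drifting toward the degenerate set $\{Q(u)=0\}$, where $g_u''(1)=0$, since there $\mathcal N^{\pm}$ ceases to be a manifold and the multiplier argument breaks down. This is precisely where the quantitative hypotheses enter — the smallness in condition $(2)$, through the constant $C$ depending only on $f^-/\int_M f^-\,dv_g$, is what forces a uniform gap, guaranteeing coercivity of $F_q$ on $\mathcal N^+$ together with strict positivity of $\inf_{\mathcal N^-}F_q$, so that both minima survive in the limit.
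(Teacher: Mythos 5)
This statement is quoted background: it is Theorem~1 of the paper, cited from \cite{3}, and the present paper contains no proof of it whatsoever, so there is no in-paper argument to compare your proposal against. Judging the proposal on its own terms, the skeleton is reasonable and the algebra on the Nehari set is correct ($F_{q}=(\tfrac12-\tfrac1q)Q$ on $\mathcal N$, $g_u''(1)=(2-q)Q(u)$, hence $Q>0$ on $\mathcal N^-$ and $Q<0$ on $\mathcal N^+$, which would indeed yield $F_q(u)<0<F_q(v)$ and compactness is available since $q<N$). But the proof has a genuine gap exactly at the point you yourself flag as ``the main difficulty'': you assert, without argument, that conditions (1)--(2) give coercivity of $F_q$ on $\mathcal N^{+}$, boundedness of minimizing sequences, and a uniform gap from the degenerate set $\{Q=B=0\}$. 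None of these is automatic. On $\mathcal N^{+}$ the infimum of $F_q=(\tfrac12-\tfrac1q)Q$ can a priori be $-\infty$: parametrizing by directions, the fiber-critical value is comparable to $\bigl(Q(u)/B(u)\bigr)^{2/(q-2)}Q(u)$, which blows up to $-\infty$ along directions with $Q(u)<0$ bounded away from zero and $B(u)\to 0^{-}$. Ruling this out is precisely where the constant $C$ of condition (2) must be \emph{constructed} (smallness of $\sup f^{+}$ forces $B(u)\approx 0$ to imply $\int_M f^{-}|u|^q\approx 0$, pushing $u$ toward the class $A$ where condition (1) forces $Q>0$); your write-up states the conclusion of this mechanism but never runs it, and similarly never shows why minimizing sequences on $\mathcal N^{-}$ are bounded in $H_2^2$ (there $Q(u_n)=B(u_n)$ bounded controls neither $\|\Delta u_n\|_2$ nor $\|u_n\|_2$, since $h<0$ makes $Q$ indefinite).

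A second, more structural gap: $\lambda_{a,f}$ is an infimum over \emph{nonnegative} functions vanishing against $f^{-}$. For the second-order analogue (Rauzy's prescribed-scalar-curvature argument, which these hypotheses are modelled on) one passes from $u$ to $|u|$ to exploit such an eigenvalue; for a fourth-order operator this is unavailable, since $|u|\notin H_2^2$ in general and $\|\Delta|u|\|_2\neq\|\Delta u\|_2$. Your argument implicitly uses ``$Q(u)>0$ for $u$ concentrated where $f\ge 0$'' for arbitrary sign-changing $u$ near the degenerate set, which does not follow from the definition of $\lambda_{a,f}$ without an additional idea. Finally, the claim that one ``can therefore build a test function with $Q<0$'' does not follow merely from the fact that the hypotheses fail to force positive-definiteness of $Q$; here it should instead be justified directly (e.g.\ $Q(c)=c^{2}\int_M h\,dv_g<0$ for constants, with $B(c)<0$ once condition (2) forces $\int_M f\,dv_g<0$). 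As it stands the proposal is a plausible outline whose central quantitative steps --- the only place conditions (1)--(3) and the specific dependence of $C$ on $f^{-}/\int_M f^{-}dv_g$ actually enter --- are asserted rather than proved.
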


\begin{theorem}
\cite{3} Let $a$, $h$ be $C^{\infty }$ functions on $M$ with $h$ negative.\
For every $C^{\infty }$ function $f$ on $M$ with $\int_{M}f^{-}dv_{g}>0$
there exists a constant $C>0$ which depends only on $\frac{f^{-}}{\int
f^{-}dv_{g}}$ such that if $f$ satisfies the following conditions 
\begin{equation*}
\left. 
\begin{array}{l}
(1)\text{ }\left\vert h(x)\right\vert <\lambda _{a,f}\text{ \ \ \ \ \ \ \
for any }x\in M\text{\ \ \ \ \ \ \ \ \ \ \ \ \ \ \ \ \ \ \ \ \ \ \ \ \ \ \ \
\ \ \ \ \ \ \ \ \ \ \ \ \ \ \ \ \ \ \ \ \ \ \ \ \ \ \ \ \ \ \ \ \ \ \ \ \ \
\ \ \ \ \ \ \ \ \ \ \ \ \ \ \ \ \ \ \ \ \ \ \ \ \ } \\ 
(2)\text{ }\frac{\sup f^{+}}{\int f^{-}dv_{g}}<C%
\end{array}%
\right.
\end{equation*}%
the critical equation%
\begin{equation*}
\Delta ^{2}u+\nabla ^{i}(a\nabla _{i}u)+hu=f\left\vert u\right\vert ^{N-2}u
\end{equation*}%
has a solution of class $C^{4,\theta }$, for some $\theta \in (0,1)$, \ with
negative energy.
\end{theorem}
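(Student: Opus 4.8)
The plan is to obtain the solution as a critical point of the energy functional attached to the critical equation, namely
\[
F(u)=\frac12\int_M\big[(\Delta u)^2-a|\nabla u|^2+hu^2\big]\,dv_g-\frac1N\int_M f|u|^N\,dv_g ,
\]
whose Euler--Lagrange equation is exactly $P_gu=f|u|^{N-2}u$ with $P_g=\Delta^2+\nabla^i(a\nabla_i\,\cdot\,)+h$. Write $Q(u)=\int_M[(\Delta u)^2-a|\nabla u|^2+hu^2]\,dv_g$ for the quadratic part. First I would use hypothesis (1), $|h|<\lambda_{a,f}$, through the variational definition of $\lambda_{a,f}$, to pin down the structure of $Q$: it is coercive modulo lower order terms (a G\aa rding-type inequality $Q(u)\ge c_1\|\Delta u\|_2^2-c_2\|u\|_2^2$), but \emph{indefinite}, since $h<0$ and the term $-a|\nabla u|^2$ open up a nontrivial cone on which $Q(u)<0$. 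On the Nehari manifold $\mathcal N=\{u\neq0:\ Q(u)=\int_M f|u|^N\}$ one computes $F(u)=\frac2n Q(u)$, so negative-energy critical points are exactly those on the branch $\mathcal N^-=\{u\in\mathcal N:\ Q(u)<0\}$.

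I would then set $\mu=\inf_{\mathcal N^-}F$ and show $\mu<0$. A function $\phi$ supported where $f<0$ (which has positive measure because $\int_M f^-\,dv_g>0$) and chosen so that $Q(\phi)<0$ can be rescaled to a point $t_\ast\phi\in\mathcal N^-$ with $F(t_\ast\phi)<0$; this is where hypothesis (2) enters, the constant $C$ (depending on $f^-/\int_M f^-\,dv_g$) being calibrated to guarantee that such a test function exists and that the infimum stays strictly negative. Note that, consistently, condition (3) of the subcritical theorem (the sign condition $\sup_M f>0$) is \emph{not} needed here: the negative-energy branch is driven by the region where $f$ is negative, not by where it is positive.

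The main obstacle is the loss of compactness of $H_2^2\hookrightarrow L^N$ at the critical exponent $N=2n/(n-4)$. The key quantitative tool is the sharp fourth-order Sobolev inequality $\|u\|_N^2\le K_0\|\Delta u\|_2^2+B\|u\|_{H_1^2}^2$, where $K_0=K(n,2)$ is the best Euclidean constant; it produces a \emph{positive} threshold level $c^\ast$, of the order $\frac2n(K_0\sup_M f^+)^{-(n-4)/4}$, below which Palais--Smale sequences are relatively compact. The decisive observation is that any minimizing sequence for $\mu$ is bounded in $H_2^2$ (by the G\aa rding inequality from (1) together with the Nehari constraint), and that in the concentration--compactness decomposition of such a sequence every bubble carries a nonnegative quantum of energy, at least $c^\ast>0$ near a concentration point where $f\le\sup_M f^+$. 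Since $\mu<0<c^\ast$, no bubble can form: the weak limit $u$ satisfies $F(u)\le\mu<0$, hence $u\not\equiv0$, and comparing with $F(u)\ge\mu$ forces $F(u)=\mu$ with strong convergence in $L^N$. Thus $u$ is a nontrivial weak solution of the critical equation with negative energy. (Equivalently, one may reach the same conclusion by subcritical approximation: take the negative-energy solutions $u_q$ furnished by Theorem~1 for $q\uparrow N$, derive a uniform $H_2^2$ bound from $F_q(u_q)<0$, and pass to the limit using the same sub-threshold estimate to prevent concentration.)

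Finally I would establish regularity. Starting from $u\in H_2^2$ a weak solution, the right-hand side $f|u|^{N-2}u$ has critical growth, so the first step is a Brezis--Kato/Trudinger iteration adapted to the fourth-order operator $P_g$ to show $u\in L^p(M)$ for every $p<\infty$; then $L^p$ elliptic estimates for $P_g$ give $u\in H_p^4$ for all $p$, whence $u\in C^{3,\beta}$ by Sobolev embedding, and a final application of Schauder estimates upgrades this to $u\in C^{4,\theta}$ for some $\theta\in(0,1)$. I expect the genuine difficulties to be concentrated in the compactness analysis and in making hypothesis (2) quantitatively force $\mu<0$; the regularity bootstrap, though requiring care because of the critical exponent, is otherwise routine.
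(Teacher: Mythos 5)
The paper does not prove this statement at all: it is quoted in the introduction as a known result from reference [3], so there is no in-paper argument to compare yours against. Judged on its own, your blueprint (negative branch of the Nehari manifold, sub-threshold compactness, elliptic bootstrap) is the right genre of argument for such a theorem, but two steps are genuinely unsupported. First, your candidate for a point of $\mathcal{N}^-$ does not work as stated: a bump $\phi$ supported in $\{f<0\}$ gives $\int_M f|\phi|^N\,dv_g<0$, but there is no reason to have $Q(\phi)<0$ --- the term $\int_M(\Delta\phi)^2\,dv_g$ dominates for localized functions --- and if $Q(\phi)>0$ while $\int_M f|\phi|^N\,dv_g<0$ the ray $t\phi$ never meets the Nehari manifold, since $t_*^{N-2}=Q(\phi)/\int_M f|\phi|^N\,dv_g$ must be positive. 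The natural test function is the constant $u\equiv 1$: then $Q(1)=\int_M h\,dv_g<0$ because $h$ is negative, and condition (2) with $C$ small forces $\int_M f\,dv_g<0$, so some $t_*>0$ gives $t_*\cdot 1\in\mathcal{N}^-$ and $\mu<0$.

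Second, and more seriously, boundedness of minimizing sequences and $\inf_{\mathcal{N}^-}F>-\infty$ do not follow from ``the G{\aa}rding inequality together with the Nehari constraint'': on $\mathcal{N}^-$ both $Q(u)$ and $\int_M f|u|^N\,dv_g$ are negative, so the G{\aa}rding inequality only yields $c_1\|\Delta u\|_2^2\le c_2\|u\|_2^2$, and nothing yet controls $\|u\|_2$ or prevents $F=\tfrac{2}{n}Q$ from tending to $-\infty$ along $\mathcal{N}^-$. This is precisely where $\lambda_{a,f}$ and the constant $C$ must do quantitative work: condition (1) makes $Q$ strictly positive on the cone of nonnegative functions vanishing on $\{f<0\}$ (the set $A$ in the definition of $\lambda_{a,f}$), so $Q(u)<0$ forces $u$ to charge $\{f<0\}$ in a quantified way, and condition (2) then bounds $\int_M f|u|^N\,dv_g=Q(u)$ from below in terms of $\int_M f^-|u|^N\,dv_g$. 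You explicitly defer this point, but without it the minimization scheme never gets off the ground. The concentration--compactness step, by contrast, is sound in outline: since $\mu<0$ and every bubble contributes a strictly positive quantum of energy, no bubbling can occur at a negative level, and the regularity bootstrap you describe is standard.
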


Let 
\begin{equation}
\Delta ^{2}u+\nabla ^{i}(a(x)\nabla _{i}u)+h(x)u=f(x)\left\vert u\right\vert
^{N-2}u+\lambda \left\vert u\right\vert ^{q-2}u+\epsilon g(x)  \tag{4}
\label{4}
\end{equation}%
where $a$, $h$, $f$ and $g$ are smooth functions on $M$ , $N=\frac{2n}{n-4}$
is the critical exponent, $2<q<N$ \ a real number, $\lambda >0$ a real
parameter and $\epsilon >0$ any small real number the second author obtain

\begin{theorem}
\cite{4} Let $(M,g)$ be a compact Riemannian $n-$manifold, $n\geq 6$, $a$, $%
h $, $f$, $g$ be smooth real \ functions on $M$ with

(i) $f(x)>0$ and $g(x)>0$ everywhere on $M$

(ii) the operator $P(u)=\Delta ^{2}u+\nabla ^{i}(a(x)\nabla _{i}u)+h(x)u$ is
coercive

(iii) if $n>6$ , we suppose $\frac{\Delta f(x_{o})}{2f(x_{o})}%
+C_{1}(n)S_{g}(x_{o})+C_{2}(n)a(x_{o})>0$ and if $n=6$, we suppose that $%
\frac{4}{3n}S_{g}(x_{o})+\frac{1}{(n-4)}a(x_{o})>0$, where $C_{1}(n)=\frac{%
5n^{2}(n-7)+52(n-1)}{6n(n+2)(5n-6)}$ and $\ C_{2}(n)=\frac{8(n-1)}{(n+2)(n-6)%
}$

Then equation (\ref{4}) has at least two distinct solutions in $H_{2}$
\end{theorem}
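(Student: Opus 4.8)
The plan is to realize both solutions as critical points of the energy functional attached to (\ref{4}),
\begin{equation*}
I(u)=\frac{1}{2}\int_M\left[(\Delta u)^2-a|\nabla u|^2+hu^2\right]dv_g-\frac{1}{N}\int_M f|u|^N\,dv_g-\frac{\lambda}{q}\int_M|u|^q\,dv_g-\epsilon\int_M gu\,dv_g,
\end{equation*}
whose critical points in $H_2^2(M)$ are exactly the weak solutions of (\ref{4}). By hypothesis (ii) the quadratic form $Q(u)=\int_M\left[(\Delta u)^2-a|\nabla u|^2+hu^2\right]dv_g$ is coercive, so $Q(u)\ge c_0\|u\|_{2,2}^2$ and, $a,h$ being smooth on the compact $M$, $\sqrt{Q}$ is a norm equivalent to $\|\cdot\|_{2,2}$. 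This is what keeps $I$ bounded below on small balls and what forces compactness of Palais--Smale sequences at subcritical levels.

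First I would produce a solution of negative energy as a local minimum. Since $2<q<N$ and $Q$ is coercive, for the unperturbed functional ($\epsilon=0$) the origin is a strict local minimum: near $0$ one has $I\ge\frac{c_0}{2}\|u\|_{2,2}^2-C\|u\|_{2,2}^N-C'\|u\|_{2,2}^q$, so there are $\rho>0$, $\delta>0$ with $I\ge\delta$ on $\partial B_\rho$ when $\epsilon=0$. For $\epsilon$ small the linear term $-\epsilon\int gu$ is a uniformly small perturbation, whence $I\ge\delta/2>0$ on $\partial B_\rho$, while choosing $v$ with $\int_M gv>0$ and $t$ small gives $I(tv)<0$. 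Thus $m:=\inf_{\overline{B_\rho}}I<0$ is attained in the interior. Applying Ekeland's variational principle on $\overline{B_\rho}$ yields a Palais--Smale sequence at the level $m$; as $m<0$ lies strictly below the critical threshold, concentration at the critical exponent is excluded and the sequence converges strongly to a first solution $u_1$ with $I(u_1)=m<0$.

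Next I would obtain a second, higher solution by the mountain pass theorem. The geometry is clear: $u_1$ is a strict local minimum with $\|u_1\|_{2,2}<\rho$, and since $f>0$ and $N,q>2$ one has $I(tv)\to-\infty$ along any fixed direction, so there is $e$ outside $B_\rho$ with $I(e)<I(u_1)$. Every path from $u_1$ to $e$ crosses $\partial B_\rho$, hence the mountain pass level
\begin{equation*}
c=\inf_{\gamma\in\Gamma}\,\max_{t\in[0,1]}I(\gamma(t)),\qquad \Gamma=\{\gamma\in C([0,1],H_2^2(M)):\gamma(0)=u_1,\ \gamma(1)=e\},
\end{equation*}
satisfies $c\ge\delta/2>0>I(u_1)$, so any critical point at level $c$ is automatically distinct from $u_1$. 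The serious difficulty is that at the critical exponent $N$ the Palais--Smale condition holds only below the explicit threshold
\begin{equation*}
c^{\ast}=\frac{2}{n}\,K(n,2)^{-n/4}\bigl(\sup_M f\bigr)^{-(n-4)/4},
\end{equation*}
where $K(n,2)$ is the best constant in the Sobolev embedding $H_2^2\hookrightarrow L^N$; the heart of the proof is the estimate $c<c^{\ast}$.

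To prove $c<c^{\ast}$ I would insert into $I$ the extremal bubbles for the fourth order Sobolev inequality, $u_\varepsilon(x)=\eta(x)\left(\frac{\varepsilon}{\varepsilon^2+r^2}\right)^{(n-4)/2}$, cut off by $\eta$ near a point $x_0$ where $f$ attains its maximum ($r=d_g(x,x_0)$), optimize over the scaling amplitude, and expand as $\varepsilon\to0$. The leading term reproduces $c^{\ast}$, while the first correction collects the local geometric data at $x_0$: the curvature terms combine with $a(x_0)$ and, when $n>6$, with $\Delta f(x_0)/f(x_0)$ to produce precisely the coefficient $\frac{\Delta f(x_{0})}{2f(x_{0})}+C_{1}(n)S_{g}(x_{0})+C_{2}(n)a(x_{0})$ (respectively the stated $n=6$ expression), whose positivity in hypothesis (iii) forces the correction to be strictly negative; the perturbations $-\frac{\lambda}{q}\int|u_\varepsilon|^q$ and $-\epsilon\int gu_\varepsilon$ are lower order and negative since $g>0$, so they only lower the level further. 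The dichotomy between $n>6$ and $n=6$ reflects the different convergence rates of the bubble integrals, which is exactly the obstacle I expect to be most delicate, since the precise constants $C_1(n),C_2(n)$ and the borderline dimension are forced by these cancellations. Once $c<c^{\ast}$ is secured, the Palais--Smale condition holds at level $c$, the mountain pass theorem yields a second critical point $u_2$ with $I(u_2)=c>0$, and $I(u_1)<0<I(u_2)$ shows $u_1\neq u_2$, giving the two distinct solutions in $H_2^2(M)$.
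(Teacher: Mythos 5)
This theorem is quoted in the paper as background from \cite{4}; the present paper contains no proof of it, so there is no internal argument to compare yours against step by step. Judged on its own terms, your strategy --- a negative-energy local minimizer produced by Ekeland's variational principle on a small ball (using the inhomogeneous term $\epsilon\int_M gu\,dv_g$ with $g>0$ to push the infimum below zero), followed by a mountain-pass critical point whose level is forced under the compactness threshold $\frac{2}{n}K_{o}^{-n/4}(\sup_M f)^{-(n-4)/4}$ by testing with Aubin-type bubbles centred at a maximum point of $f$ --- is the standard and essentially correct route for equation (\ref{4}) with $2<q<N$. It is also consistent with the machinery this paper does develop for its own results: the Palais--Smale analysis of Lemma \ref{lem4} and the test-function expansions of Section 4 produce exactly the kind of dimension-dependent coefficients (and the $n=6$ versus $n>6$ dichotomy, where an $\epsilon^2\log(1/\epsilon^2)$ term replaces $\epsilon^2$) that appear in hypothesis (iii). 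The methodological difference is that the paper's own new theorems work by minimization on the Nehari-type constraint $M_\lambda$ for a sublinear, homogeneous perturbation ($1<q<2$, no $\epsilon g$ term), whereas your two solutions come from unconstrained local minimization plus a mountain pass, which is the natural scheme for the cited statement.

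Two points in your sketch need more care. First, the compactness claim for the mountain-pass level is not simply ``$c<c^{\ast}$'': if a Palais--Smale sequence at level $c$ fails to converge, the splitting argument gives $c\geq I(u)+c^{\ast}$ where $u$ is the weak limit, itself a critical point whose energy may be negative (indeed $I(u_1)=m<0$), so one must bound the negative energies of critical points from below by a quantity tending to $0$ with $\lambda$ and $\epsilon$ and then verify $c<m+c^{\ast}$; your bubble estimate supplies this only after extracting from hypothesis (iii) a margin $\delta_0>0$ independent of $\lambda,\epsilon$ and taking those parameters small. The same remark applies to your claim that ``$m<0$ excludes concentration'' for the first solution: the correct statement is that nonconvergence would force $m\geq\inf_{\overline{B_\rho}}I+c^{\ast}=m+c^{\ast}$, a contradiction, and this should be said. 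Second, the identification of the first-order coefficient in the expansion with the precise constants $C_1(n)$ and $C_2(n)$ is the substantive computation of the proof; your proposal asserts rather than performs it, though Section 4 of the paper carries out the analogous expansion and confirms that the method yields a condition of exactly this shape.
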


Our main results in this paper state as follows

\begin{theorem}
\label{th1} Let $\left( M,g\right) $ be an $n$-dimensional compact
Riemannian manifold with $n\geqslant 6$ and $f$ a positive smooth function
on $M$. Assume that the operator $P(u)=\Delta ^{2}u+\nabla ^{i}(a(x)\nabla
_{i}u)+b(x)u$ is coercive.

If $n>6$ and at the point $x_{o}$ where the function $f$ achieves its
maximum the following condition is satisfied 
\begin{equation*}
\frac{n\left( n^{2}+4n-20\right) }{2\left( n-2\right) \left( n^{2}-4\right) }%
S_{g}\left( x_{o}\right) +\frac{n\left( n-6\right) }{\left( n-2\right)
\left( n^{2}-4\right) }a\left( x_{o}\right) -\frac{n}{8\left( n-2\right) }%
\frac{\Delta f(x_{o})}{f\left( x_{o}\right) }>0
\end{equation*}%
then there exists a $\Lambda >0$ such that for any $\lambda \in \left(
0,\Lambda \right) $, equation (\ref{1}) have a non trivial solution of class 
$C^{4,\theta }(M)$, $\alpha \in (0,1)$.

If $n=6$ and the condition $S_{g}(x_{o})>-3a(x_{o})$ is satisfied, then
equation (\ref{1}) has a solution $u$ of class $C^{4,\theta }(M)$ provided
that $\lambda \in \left( 0,\Lambda \right) $.
\end{theorem}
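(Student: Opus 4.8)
The plan is to realize the asserted solution as a critical point of the energy functional attached to (\ref{1}), obtained by minimization over a Nehari-type manifold, with the loss of compactness caused by the critical exponent overcome by a sharp test-function estimate in which the stated condition at $x_o$ is decisive. First I would fix the variational framework. Since $P$ is coercive, the symmetric bilinear form
\[
\langle u,v\rangle_P=\int_M\Delta u\,\Delta v\,dv_g-\int_M a\,\nabla u\cdot\nabla v\,dv_g+\int_M b\,uv\,dv_g
\]
is an inner product on $H_2^2(M)$ whose norm $\|u\|_P=\langle u,u\rangle_P^{1/2}$ is equivalent to $\|\cdot\|_{2,2}$, and the weak solutions of (\ref{1}) are exactly the critical points of the $C^1$ functional
\[
I_\lambda(u)=\tfrac12\|u\|_P^2-\tfrac1N\int_M f\,|u|^N\,dv_g-\tfrac{\lambda}{q}\int_M|u|^q\,dv_g .
\]
Because $1<q<2$, the fibering map $t\mapsto I_\lambda(tu)$ is negative for small $t>0$, so $0$ is not a local minimum, while the critical growth forces $I_\lambda(tu)\to-\infty$ as $t\to+\infty$; this is the familiar concave–convex profile.

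Next I would introduce the Nehari manifold $\mathcal N_\lambda=\{u\in H_2^2(M)\setminus\{0\}:\langle I_\lambda'(u),u\rangle=0\}$ and split it, according to the sign of the second derivative of the fibering map, into $\mathcal N_\lambda^+$, $\mathcal N_\lambda^0$, $\mathcal N_\lambda^-$. A direct computation of $\min_t\bigl(\lambda t^{q-2}\!\int|u|^q+t^{N-2}\!\int f|u|^N\bigr)$ shows that there is $\Lambda>0$ (depending on $\sup f$ and the Sobolev constant) such that for $\lambda\in(0,\Lambda)$ one has $\mathcal N_\lambda^0=\emptyset$, every nonzero $u$ has a unique multiple in each of $\mathcal N_\lambda^\pm$, and $I_\lambda$ is coercive and bounded below on $\mathcal N_\lambda$; moreover constrained critical points of $I_\lambda|_{\mathcal N_\lambda}$ are free critical points of $I_\lambda$. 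I would produce the solution by minimizing $I_\lambda$ over $\mathcal N_\lambda^-$, setting $c_\lambda^-=\inf_{\mathcal N_\lambda^-}I_\lambda$. The obstruction is that, the embedding $H_2^2(M)\hookrightarrow L^N(M)$ being non-compact, $I_\lambda$ satisfies the Palais–Smale condition only below the one-bubble threshold
\[
c^*=\tfrac2n\,K_0^{-n/4}\,(\sup_M f)^{-(n-4)/4},
\]
where $K_0$ is the best constant in $\|u\|_{L^N(\mathbb R^n)}^2\le K_0\|\Delta u\|_{L^2(\mathbb R^n)}^2$. Since $I_\lambda(v)\le J(v)$ where $J$ is the same functional with the concave term dropped, one has $c_\lambda^-\le\max_{t\ge0}J(tu)$ along any test direction, so it suffices to exhibit a direction whose pure-critical Sobolev quotient lies strictly below the threshold, thereby placing $c_\lambda^-<c^*$.

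The heart of the proof, and the place where the hypotheses enter, is this quotient estimate. I would take the Aubin–Talenti extremals $U_\epsilon(x)=c_n\bigl(\epsilon/(\epsilon^2+|x|^2)\bigr)^{(n-4)/2}$, cut them off inside a geodesic ball centred at the maximum point $x_o$ of $f$, and transplant them to $M$ to obtain test functions $u_\epsilon$. Expanding $\|u_\epsilon\|_P^2$, $\int_M f|u_\epsilon|^N$ and $\int_M|u_\epsilon|^q$ in powers of $\epsilon$ — using normal coordinates at $x_o$ (the volume element bringing in $S_g(x_o)$), the term $-\int a|\nabla u_\epsilon|^2$ (bringing in $a(x_o)$), and the Taylor expansion of $f$ with $\nabla f(x_o)=0$ (bringing in $\Delta f(x_o)$) — and then maximizing $t\mapsto I_\lambda(tu_\epsilon)$, one finds that $\max_{t\ge0}I_\lambda(tu_\epsilon)<c^*$ for small $\epsilon$ precisely when the displayed combination
\[
\frac{n(n^2+4n-20)}{2(n-2)(n^2-4)}S_g(x_o)+\frac{n(n-6)}{(n-2)(n^2-4)}a(x_o)-\frac{n}{8(n-2)}\frac{\Delta f(x_o)}{f(x_o)}
\]
is strictly positive. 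This expansion is the main difficulty: the coefficients must be tracked to the exact order in $\epsilon$, and the orders differ in the two regimes — for $n>6$ the decisive correction is a genuine power of $\epsilon$, whereas for $n=6$ the governing integrals are logarithmically divergent, which is why the hypothesis degenerates to the simpler $S_g(x_o)>-3a(x_o)$; the two cases must be handled separately.

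Finally I would assemble the pieces. Fixing $\epsilon$ small so that $c_\lambda^-\le\max_tI_\lambda(tu_\epsilon)<c^*$ for all $\lambda\in(0,\Lambda)$, the Palais–Smale condition at the level $c_\lambda^-$ applies to a minimizing sequence for $c_\lambda^-$; a concentration–compactness analysis then shows that its weak limit $u$ is nonzero and that the convergence is strong, so $u$ is a nontrivial weak solution of (\ref{1}). Regularity is then routine: rewriting (\ref{1}) as $\Delta^2u=-\nabla^i(a\nabla_iu)-bu+f|u|^{N-2}u+\lambda|u|^{q-2}u$ and bootstrapping through the $L^p$ and Schauder theory for the fourth-order operator $P$ raises $u$ to class $C^{4,\theta}(M)$ for some $\theta\in(0,1)$. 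I expect the test-function expansion, together with ruling out vanishing of the weak limit at the critical level, to be the decisive steps; the Nehari splitting and the regularity bootstrap are standard in the theory of critical elliptic problems.
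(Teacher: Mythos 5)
Your proposal is correct and follows essentially the same route as the paper: minimization of the energy over a Nehari-type constraint set (the paper's $M_{\lambda}=\{Q_{\lambda}(u)=0,\ \Vert u\Vert\geq\rho\}$ plays the role of your $\mathcal{N}_{\lambda}^{-}$), verification that the Palais--Smale condition holds below the one-bubble threshold $\frac{2}{n}K_{o}^{-n/4}(\max_{M}f)^{-(n-4)/4}$, vanishing of the Lagrange multiplier so that constrained minimizers are free critical points, and the decisive test-function expansion with the truncated Aubin--Talenti bubbles centred at the maximum point of $f$, treating $n>6$ (power correction) and $n=6$ (logarithmic correction) separately. The only cosmetic difference is that you invoke the full fibering-map splitting $\mathcal{N}_{\lambda}^{\pm},\mathcal{N}_{\lambda}^{0}$, whereas the paper simply imposes the lower bound $\Vert u\Vert\geq\rho$ to isolate the same portion of the Nehari set.
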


In case of constant coefficients, equation (\ref{1}) reduces to 
\begin{equation}
\Delta ^{2}u+\alpha \Delta u+\beta u=f(x)\left\vert u\right\vert
^{N-2}u+\lambda \left\vert u\right\vert ^{q-2}u  \tag{5}  \label{5}
\end{equation}%
where $\alpha $ and $\beta $ are real constants; we get the existence of
tree solutions.

\begin{theorem}
\label{th2} Suppose all the conditions of Theorem\ref{th1} are satisfied and
moreover $\frac{\alpha ^{2}}{4}>\beta $ with $\alpha >0$. Then equation (\ref%
{5}) has beside a positive $u^{+}$ and a negative $u^{-}$ smooth solutions a
third solution $w$ distinct from $u^{+}$ and $u^{-}$.
\end{theorem}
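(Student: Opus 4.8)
The plan is to obtain all three solutions as critical points of the energy functional associated with (\ref{5}),
\begin{equation*}
I_{\lambda}(u)=\frac{1}{2}\int_{M}\left[(\Delta u)^{2}+\alpha\left\vert \nabla u\right\vert ^{2}+\beta u^{2}\right]dv_{g}-\frac{1}{N}\int_{M}f\left\vert u\right\vert ^{N}dv_{g}-\frac{\lambda}{q}\int_{M}\left\vert u\right\vert ^{q}dv_{g},
\end{equation*}
whose quadratic part $\tfrac12\langle Pu,u\rangle$ is, by the assumed coercivity of $P$, equivalent to $\Vert\cdot\Vert_{H_{2}^{2}}^{2}$. Two features drive the argument. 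First, since $1<q<2<N$, the functional is of concave--convex type: the term $-\tfrac{\lambda}{q}\int|u|^{q}$ forces $I_{\lambda}<0$ on a punctured neighbourhood of the origin, while the critical term $-\tfrac{1}{N}\int f|u|^{N}$ makes $I_{\lambda}$ unbounded below. Second, because $\alpha>0$ and $\tfrac{\alpha^{2}}{4}>\beta$, the polynomial $t^{2}+\alpha t+\beta$ has real roots, so $\Delta^{2}+\alpha\Delta+\beta=(\Delta+c_{1})(\Delta+c_{2})$ with $c_{1}+c_{2}=\alpha>0$; each second-order factor $\Delta+c_{i}$ obeys a maximum principle, and applying them successively shows that $Pu\geq0$ implies $u\geq0$. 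This factorization is what converts sign information on the right-hand side into sign information on $u$, and is the only place the extra hypothesis of Theorem \ref{th2} is used.

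First I would produce the positive solution. Consider the truncated functional $I_{\lambda}^{+}$ obtained from $I_{\lambda}$ by replacing $|u|^{N}$ and $|u|^{q}$ by $(u^{+})^{N}$ and $(u^{+})^{q}$, where $u^{+}=\max(u,0)$. Exactly as in Theorem \ref{th1}, for $\lambda$ small the concave term gives $\inf_{\Vert u\Vert\leq\rho}I_{\lambda}^{+}<0$ on a small ball, this infimum is attained because on the ball the critical term stays under control, and the minimizer $v$ lies in the interior, hence is a free critical point solving $Pv=f(v^{+})^{N-1}+\lambda(v^{+})^{q-1}$. Testing against $v^{-}=\max(-v,0)$ and invoking the factorized maximum principle gives $v\geq0$, so $v=v^{+}$ solves (\ref{5}); the strong maximum principle then gives $v>0$, and we set $u^{+}:=v$, with $I_{\lambda}(u^{+})<0$. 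Since $f>0$ and the nonlinearities are odd, $I_{\lambda}$ is even, so $u^{-}:=-u^{+}$ is a negative solution of the same (negative) energy; elliptic regularity upgrades both to $C^{4,\theta}$.

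For the third solution $w$ I would run a mountain-pass scheme for the full functional $I_{\lambda}$. After checking that $u^{+}$ is in fact a local minimizer of $I_{\lambda}$ on all of $H_{2}^{2}$ (so that, by evenness, $u^{-}=-u^{+}$ is another one), the minimax level
\begin{equation*}
c=\inf_{\gamma\in\Gamma}\max_{t\in[0,1]}I_{\lambda}(\gamma(t)),\qquad \Gamma=\{\gamma\in C([0,1],H_{2}^{2}):\gamma(0)=u^{+},\ \gamma(1)=-u^{+}\}
\end{equation*}
produces a critical point whenever the Palais--Smale condition holds at level $c$; because $c>0$ while $I_{\lambda}(u^{\pm})<0$, this critical point $w$ is automatically distinct from $u^{+}$ and $u^{-}$.

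The main obstacle is the failure of compactness caused by the critical exponent $N$: $I_{\lambda}$ does not satisfy $(PS)$ globally, only below a threshold of the form
\begin{equation*}
c^{*}=\frac{2}{n}\,K_{0}^{-n/4}\bigl(\sup_{M}f\bigr)^{-(n-4)/4}
\end{equation*}
fixed by the best constant $K_{0}$ of the embedding $H_{2}^{2}\hookrightarrow L^{N}$. One must therefore show $0<c<c^{*}$, and this is exactly where the hypotheses inherited from Theorem \ref{th1} enter: inserting the standard Paneitz--Branson test bubbles concentrated at the maximum point $x_{0}$ of $f$ and expanding $I_{\lambda}$, the sign of the correction term is governed by the quantity
\begin{equation*}
\frac{n(n^{2}+4n-20)}{2(n-2)(n^{2}-4)}S_{g}(x_{0})+\frac{n(n-6)}{(n-2)(n^{2}-4)}a(x_{0})-\frac{n}{8(n-2)}\frac{\Delta f(x_{0})}{f(x_{0})},
\end{equation*}
whose positivity (with the dimensional split $n>6$ versus $n=6$) is what forces the mountain-pass level strictly below $c^{*}$. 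Once $0<c<c^{*}$ is established, $(PS)_{c}$ yields a critical point $w$ at level $c$, necessarily different from $u^{\pm}$, and elliptic regularity gives $w\in C^{4,\theta}(M)$, completing the proof.
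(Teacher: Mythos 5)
Your route to the first two solutions is different from the paper's but structurally sound in outline: the paper minimizes the truncated functionals $J_{\lambda }^{\pm }$ over the Nehari-type sets $M_{\lambda }^{\pm }$, where by Lemma \ref{lem2} the energy is bounded below by a positive constant, so its $u^{+}$ and $u^{-}$ have \emph{positive} energy; you instead minimize over a small ball around the origin in the Ambrosetti--Brezis--Cerami fashion and obtain a local minimizer of \emph{negative} energy. Both arguments hinge on the same device for the sign of the solution, namely the factorization $\Delta ^{2}+\alpha \Delta +\beta =(\Delta +x_{1})(\Delta +x_{2})$ with $x_{1},x_{2}>0$ and two applications of the second-order maximum principle --- which is indeed the only place the hypothesis $\alpha ^{2}/4>\beta $, $\alpha >0$ is used. (Your aside about ``testing against $v^{-}$'' is a second-order reflex that does not work for $\Delta ^{2}$; the factorized maximum principle must carry all the weight, as it does in the paper.)

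The genuine gap is in the third solution. You run a free mountain pass between $u^{+}$ and $u^{-}=-u^{+}$ and assert $c>0$, but you never establish a mountain-pass geometry separating these two points. In your scheme both $u^{\pm }$ lie in a small ball around the origin with $I_{\lambda }(u^{\pm })<0$, and the segment $t\mapsto (1-2t)u^{+}$ joins them while passing through $0$, where $I_{\lambda }=0$; hence $c\leq \max_{s\in \lbrack -1,1]}I_{\lambda }(su^{+})\leq \frac{1}{2}\Vert u^{+}\Vert ^{2}$, which is small, and nothing you wrote forces $c>0$. A ``critical point at level $c$'' could then be the trivial critical point $u\equiv 0$. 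In the classical concave--convex setting the mountain pass runs from the small local minimizer to a \emph{remote} point of large norm and very negative energy, so every admissible path must cross a sphere $\Vert u\Vert =\rho $ on which $I_{\lambda }\geq \delta >0$; a path between $u^{+}$ and $-u^{+}$ has no such barrier to cross. The paper avoids this by working throughout on the constraint manifold $M_{\lambda }=\{Q_{\lambda }(u)=0,\ \Vert u\Vert \geq \rho \}$: there $J_{\lambda }\geq A>0$ (Lemma \ref{lem2}), the trivial function is excluded by the condition $\Vert u\Vert \geq \rho $, the two solutions are shown in Lemma \ref{lem6} to be local minima of $J_{\lambda }$ \emph{restricted to} $M_{\lambda }$, and the minimax in Theorem \ref{th5} is taken over paths lying in $M_{\lambda }$, which is what makes the level a genuine critical level distinct from $J_{\lambda }(u^{\pm })$. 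To repair your argument you would either have to adopt this constrained minimax, or exhibit an actual barrier in $H_{2}^{2}$ separating $u^{+}$ from $-u^{+}$ on which $I_{\lambda }$ is uniformly positive --- something your negative-energy local minimizers do not provide. Your discussion of the threshold $c^{\ast }$ and the test-function expansion is consistent with the paper's Section 4, but it only becomes relevant once $0<c$ is secured.
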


The technique relies on critical points theory. To find solutions we use a
method developed in \cite{1}. The solutions appeared as critical points
restricted to a suitable manifold. In the case of constant coefficients we
obtain the existence of two s

Consider the functionals $J_{\lambda }$, $J_{\lambda }^{+}$ and $J_{\lambda
}^{-}$ defined on $H_{2}$ by 
\begin{equation}
J_{\lambda }(u)=\frac{1}{2}\left( \left\Vert \Delta u\right\Vert
_{2}^{2}-\int_{M}a(x)\left\vert \nabla u\right\vert
^{2}dv_{g}+\int_{M}b(x)u^{2}dv_{g}\right) -\frac{\lambda }{q}%
\int_{M}\left\vert u\right\vert ^{q}dv_{g}\text{\ \ \ }  \tag{6}  \label{6}
\end{equation}%
\begin{equation*}
-\frac{1}{N}\int_{M}f(x)\left\vert u\right\vert ^{N}dv_{g},
\end{equation*}
\begin{equation}
J_{\lambda }^{+}(u)=\frac{1}{2}\left( \left\Vert \Delta u\right\Vert
_{2}^{2}-\int_{M}a(x)\left\vert \nabla u\right\vert
^{2}dv_{g}+\int_{M}b(x)u^{2}dv_{g}\right) -\frac{\lambda }{q}\int_{M}\left(
u^{+}\right) ^{q}dv_{g}\text{\ \ \ }  \tag{6}
\end{equation}%
\begin{equation*}
-\frac{1}{N}\int_{M}f(x)\left( u^{+}\right) ^{N}dv_{g}
\end{equation*}%
and 
\begin{equation}
J_{\lambda }^{-}(u)=\frac{1}{2}\left( \left\Vert \Delta u\right\Vert
_{2}^{2}-\int_{M}a(x)\left\vert \nabla u\right\vert
^{2}dv_{g}+\int_{M}b(x)u^{2}dv_{g}\right) -\frac{\lambda }{q}%
\int_{M}\left\vert u^{-}\right\vert ^{q}dv_{g}\text{\ \ \ }  \tag{6}
\end{equation}%
\begin{equation*}
-\frac{1}{N}\int_{M}f(x)\left\vert u^{-}\right\vert ^{N}dv_{g}
\end{equation*}%
where $u^{+}=\max \left( u,0\right) $ and $u^{-}=\min \left( u,0\right) $.

Let

\begin{equation*}
Q_{\lambda }\left( u\right) =\left\langle \nabla J_{\lambda }\left( u\right)
,u\right\rangle
\end{equation*}

and

\begin{equation*}
Q_{\lambda }^{\pm }\left( u\right) =\left\langle \nabla J_{\lambda }^{\pm
}\left( u\right) ,u\right\rangle
\end{equation*}%
where $\left\langle \nabla J_{\lambda }\left( u\right) ,v\right\rangle $
denotes the value of $\nabla J_{\lambda }\left( u\right) $ at $v$.

We consider also the set%
\begin{equation*}
M_{\lambda }=\left\{ u\in H_{2}:Q_{\lambda }\left( u\right) =0\text{ and }%
\left\Vert u\right\Vert \geq \rho >0\right\}
\end{equation*}%
and%
\begin{equation*}
M_{\lambda }^{\pm }=\left\{ u\in H_{2}:Q_{\lambda }^{\pm }\left( u\right) =0%
\text{ and }\left\Vert u\right\Vert \geq \rho >0\right\} \text{.}
\end{equation*}
Along this paper the functions $a$ and $b$ are taken such that

\begin{equation*}
\left\Vert u\right\Vert ^{2}=\left\Vert \Delta u\right\Vert
_{2}^{2}-\int_{M}a(x)\left\vert \nabla u\right\vert
^{2}dv_{g}+\int_{M}b(x)u^{2}dv_{g}
\end{equation*}%
is a norm on $H_{2}^{2}(M)$ equivalent to the usual one: for example by
letting $\max_{x\in M}a(x)<0$ and $\min_{x\in M}b(x)>0$ which is equivalent
to assume that the operator $P(u)=\Delta ^{2}u+a(x)\Delta u+b(x)u$ is
coercive.

First we ensure that the manifold $M_{\lambda }$ ( respectively $M_{\lambda
}^{\pm }$ ) is not empty.

\begin{lemma}
\label{lem1} There is a real $\lambda _{o}>0$ such that the set $M_{\lambda
} $ is non empty for any $\lambda \in \left( 0,\lambda _{o}\right) $.
\end{lemma}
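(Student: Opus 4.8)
The plan is to show that for each $u \neq 0$, the scalar equation $Q_\lambda(tu) = 0$ in the dilation parameter $t > 0$ has a solution with $\|tu\| \geq \rho$, once $\lambda$ is small. Fixing $u \in H_2$ with $\|u\| = 1$ and writing $\varphi(t) = Q_\lambda(tu) = \langle \nabla J_\lambda(tu), tu\rangle$, I would compute directly from the definition of $J_\lambda$ that
\begin{equation*}
\varphi(t) = t^2 \|u\|^2 - \lambda t^q \int_M |u|^q\, dv_g - t^N \int_M f |u|^N\, dv_g.
\end{equation*}
Here I have used that $\langle \nabla J_\lambda(v), v\rangle$ returns the $2$-homogeneous quadratic form with coefficient $1$ (since $\|v\|^2$ is already squared in $J_\lambda$), while the $\lambda$-term and the critical term are $q$- and $N$-homogeneous respectively. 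The geometry is the standard fibering picture: since $2 < q$ is false here ($1 < q < 2$), the ordering of exponents is $q < 2 < N$, so near $t = 0$ the term $-\lambda t^q$ dominates and $\varphi < 0$, while for intermediate $t$ the quadratic term $t^2$ can push $\varphi$ positive, and for large $t$ the critical term $-t^N$ again drives $\varphi \to -\infty$.

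First I would factor out $t^q$ and study $\psi(t) = t^{2-q}\|u\|^2 - \lambda \int_M |u|^q\, dv_g - t^{N-q}\int_M f|u|^N\, dv_g$, whose zeros are exactly the positive zeros of $\varphi$. The function $g(t) = t^{2-q}\|u\|^2 - t^{N-q}\int_M f|u|^N\,dv_g$ vanishes at $t = 0$, is positive for small $t > 0$ (because $2 - q > 0$), and tends to $-\infty$; hence it attains a strictly positive maximum $m(u) > 0$ at some interior point. Then $\psi(t) = 0$ is solvable in $t$ precisely when $\lambda \int_M |u|^q\, dv_g \leq m(u)$, i.e. for $\lambda$ below a threshold depending on $u$. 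The crux is to obtain a threshold $\lambda_o$ \emph{uniform} over the unit sphere: I would use the Sobolev embedding $H_2^2 \hookrightarrow L^N$ and $H_2^2 \hookrightarrow L^q$ (valid since $q < N$) together with the equivalence of $\|\cdot\|$ to the standard norm to bound $\int_M f|u|^N\, dv_g \leq C_N \|u\|^N$ and $\int_M |u|^q\, dv_g \leq C_q\|u\|^q$ from above, which for $\|u\| = 1$ gives a positive lower bound for $m(u)$ and an upper bound for the $L^q$-integral, independent of $u$.

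With those uniform estimates, $m(u) \geq m_0 > 0$ and $\int_M|u|^q\,dv_g \leq C_q$ on the unit sphere, so choosing $\lambda_o = m_0/C_q$ guarantees that for every $\lambda \in (0, \lambda_o)$ and every such $u$ there is a $t = t(u) > 0$ with $Q_\lambda(t u) = 0$. To finish I must check the side condition $\|tu\| = t \geq \rho$: I would take the root $t$ lying to the right of the maximizer of $g$, where $g$ is decreasing, and observe that as $\lambda \to 0$ this root approaches the positive zero of $g$, which is bounded below by a positive constant; shrinking $\lambda_o$ (and fixing $\rho$ small accordingly) keeps $t \geq \rho$. The main obstacle I anticipate is precisely this uniformity step — converting the pointwise solvability of $\psi(t) = 0$ into a single $\lambda_o$ and simultaneously controlling the norm from below by $\rho$ — since the naive per-$u$ threshold could degenerate; the Sobolev inequalities and the compactness of the unit sphere's image under the relevant continuous functionals are what rescue it.
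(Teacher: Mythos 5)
Your proposal is correct and follows essentially the same route as the paper: both reduce to the one-variable fibering map $t\mapsto Q_{\lambda}(tu)=t^{2}\|u\|^{2}-\lambda t^{q}\|u\|_{q}^{q}-t^{N}\int_{M}f|u|^{N}dv_{g}$ and use the Sobolev and H\"older inequalities to turn the per-$u$ solvability threshold into a single $\lambda_{o}>0$. The only cosmetic differences are that you factor out $t^{q}$ and compare $\lambda\int_{M}|u|^{q}dv_{g}$ with the maximum of the remaining two terms, whereas the paper factors out $t^{2}$ and compares explicit upper and lower envelopes at the fixed point $t_{o}/2$ after normalizing $\|u\|$; your treatment of the side condition $\|tu\|\geq\rho$ via the root to the right of the maximizer is also sound.
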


\begin{proof}
For $u\in H_{2}^{2}\left( M\right) $ with $\left\Vert u\right\Vert
_{2,2}\geq \rho >0$ and $t>0$, 
\begin{equation*}
Q_{\lambda }\left( tu\right) =t^{2}\left\Vert u\right\Vert ^{2}-\lambda
t^{q}\left\Vert u\right\Vert _{q}^{q}-t^{N}\int_{M}f(x)\left\vert
u\right\vert ^{N}dv_{g}\text{.}
\end{equation*}%
Put 
\begin{equation*}
\alpha \left( t\right) =\left\Vert u\right\Vert
^{2}-t^{N-2}\int_{M}f(x)\left\vert u\right\vert ^{N}dv_{g}
\end{equation*}%
and 
\begin{equation*}
\beta \left( t\right) =\lambda t^{q-2}\left\Vert u\right\Vert _{q}^{q}\text{.%
}
\end{equation*}%
The Sobolev inequality leads to%
\begin{equation*}
\alpha \left( t\right) \geq \left\Vert u\right\Vert ^{2}-\max_{x\in
M}f\left( x\right) \left( \max \left( \left( 1+\epsilon \right)
K_{o},A_{\epsilon }\right) \right) ^{N}\left\Vert u\right\Vert
_{H_{2}^{2}\left( M\right) }^{N}t^{N-2}
\end{equation*}%
where 
\begin{equation*}
\frac{1}{K_{\circ }}=\inf {}_{u\in H_{2}^{2}(%
\mathbb{R}
^{n})-\{0\}}\frac{\left\Vert \Delta u\right\Vert _{2}^{2}}{\left\Vert
u\right\Vert _{N}^{2}}
\end{equation*}
is the best constant in the Sobolev's embedding $H_{2}^{2}(R^{n})\subset
L^{N}(R^{n})$ (see T. Aubin \cite{2} ) and $A_{\epsilon }$ is a positive
constant depending on $\epsilon $, and since the norms $\left\Vert
.\right\Vert $ and $\left\Vert .\right\Vert _{H_{2}^{2}\left( M\right) }$
are equivalent, there is $\Lambda >0$ such that 
\begin{equation}
\alpha \left( t\right) \geq \left\Vert u\right\Vert ^{2}-\Lambda ^{-\frac{N}{%
2}}\max_{x\in M}f\left( x\right) \left( \max \left( \left( 1+\epsilon
\right) K_{o},A_{\epsilon }\right) \right) ^{N}\left\Vert u\right\Vert
^{N}t^{N-2}\text{.}  \tag{7}  \label{7}
\end{equation}%
Combining the H\"{o}lder and the Sobolev inequalities and taking account of
the equivalence of the norms $\left\Vert .\right\Vert $ and $\left\Vert
.\right\Vert _{H_{2}^{2}\left( M\right) }$, we get%
\begin{equation}
\beta \left( t\right) \leq \lambda V\left( M\right) ^{1-\frac{2}{N}}\left(
\max \left( \left( 1+\epsilon \right) K_{o},A\right) \right) ^{\frac{n}{2}%
}\left\Vert u\right\Vert ^{q}t^{q-2}\text{.}  \tag{8}  \label{8}
\end{equation}%
Let $\alpha _{1}(t)$ and $\beta _{1}(t)$ denote respectively right hand
sides of inequalities (\ref{7}) and (\ref{8}); $\alpha _{1}\left( t\right) $
vanishes for%
\begin{equation}
t_{o}=\frac{1}{\left\Vert u\right\Vert \left( \max_{x\in M}f(x)\right) ^{%
\frac{1}{N-2}}\left( \max \left( \left( 1+\epsilon \right) K_{o},A_{\epsilon
}\right) \right) ^{\frac{N}{2\left( N-2\right) }}}\text{.}  \tag{9}
\label{9}
\end{equation}%
So if we choose 
\begin{equation*}
\left\Vert u\right\Vert =\frac{1}{\left( \max_{x\in M}f(x)\right) ^{\frac{1}{%
N-2}}\left( \max \left( \left( 1+\epsilon \right) K_{o},A_{\epsilon }\right)
\right) ^{\frac{N}{2\left( N-2\right) }}}
\end{equation*}%
$t_{o}=1$.

Now since $\alpha _{1}(t)$ and $\beta _{1}\left( t\right) $ are both
decreasing functions, we get 
\begin{equation*}
\min_{x\in \left( 0,\frac{1}{2}\right) }\alpha _{1}\left( t\right) =\alpha
_{1}(\frac{1}{2})=\frac{\left( 1-2^{2-N}\right) }{\left( \max_{x\in
M}f(x)\right) ^{\frac{2}{N-2}}\left( \max \left( \left( 1+\epsilon \right)
K_{o},A_{\epsilon }\right) \right) ^{\frac{N}{\left( N-2\right) }}}
\end{equation*}%
\begin{equation*}
\geq \left( 1-2^{2-N}\right) \rho ^{2}>0
\end{equation*}%
and 
\begin{equation*}
\min_{x\in \left( 0,\frac{t_{o}}{2}\right) }\beta _{1}\left( t\right) =\beta
_{1}(\frac{1}{2})=\frac{2^{2-q}\lambda V(M)^{\left( 1-\frac{2}{N}\right) }}{%
\left( \max_{x\in M}f(x)\right) ^{\frac{q}{N-2}}\left( \max \left( \left(
1+\epsilon \right) K_{o},A_{\epsilon }\right) \right) ^{\frac{q-2+N}{N-2}}}>0%
\text{.}
\end{equation*}%
The equation $Q_{\lambda }(u)=0$ admits a solution if $\min_{x\in \left( 0,%
\frac{1}{2}\right) }\alpha _{1}\left( t\right) \geq \min_{x\in \left( 0,%
\frac{1}{2}\right) }\beta _{1}\left( t\right) $ that is to say if 
\begin{equation*}
0<\lambda <\lambda _{o}=\frac{\left( 2^{q-2}-2^{q-N}\right) V(M)^{\left( 1-%
\frac{2}{N}\right) }}{\left( \max_{x\in M}f(x)\right) ^{\frac{q-2}{N-2}%
}\left( \max \left( \left( 1+\epsilon \right) K_{o},A_{\epsilon }\right)
\right) ^{\frac{q-2}{N-2}}}\text{.}
\end{equation*}%
Hence the set $M_{\lambda }$ is nonempty for any $\lambda \in \left(
0,\lambda _{o}\right) $.

Since $Q_{\lambda }^{\pm }(u)\geqslant Q_{\lambda }(u)$ the same
calculations lead to the conclusion.
\end{proof}

\section{Palais-Smale conditions}

\begin{lemma}
\label{lem2} There is $A>0$ such that $J_{\lambda }(u)\geq A>0$ ( resp. $%
J_{\lambda }(u)\geq A>0$ ) for any $u\in M_{\lambda }$ with $\lambda \in
\left( 0,\min \left( \lambda _{o},\lambda _{1}\right) \right) $ where $%
\lambda _{1}=\frac{\frac{(N-2)\Lambda ^{-\frac{q}{2}}}{2\left( N-q\right) }}{%
V(M)^{1-\frac{2}{N}}\left( \max \left( \left( 1+\epsilon \right)
K_{o},A_{\epsilon }\right) \right) ^{\frac{q}{2}}\rho ^{q-2}}$and $\lambda
_{o}$ is as in Lemma \ref{1}.
\end{lemma}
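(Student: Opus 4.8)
The plan is to bound $J_\lambda(u)$ from below on the constraint manifold $M_\lambda$ by exploiting the constraint $Q_\lambda(u)=0$ to eliminate the critical term. First I would write out $J_\lambda(u)$ and $Q_\lambda(u)$ side by side:
\begin{equation*}
J_\lambda(u)=\tfrac{1}{2}\|u\|^2-\tfrac{\lambda}{q}\|u\|_q^q-\tfrac{1}{N}\int_M f|u|^N\,dv_g,
\qquad
Q_\lambda(u)=\|u\|^2-\lambda\|u\|_q^q-\int_M f|u|^N\,dv_g .
\end{equation*}
On $M_\lambda$ the relation $Q_\lambda(u)=0$ gives $\int_M f|u|^N\,dv_g=\|u\|^2-\lambda\|u\|_q^q$. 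Substituting this into $J_\lambda$ and collecting terms, the $\|u\|^2$ contributions combine with coefficient $\tfrac12-\tfrac1N=\tfrac{N-2}{2N}$ and the $\|u\|_q^q$ contributions combine with coefficient $-\tfrac1q+\tfrac1N=-\tfrac{N-q}{qN}$, yielding
\begin{equation*}
J_\lambda(u)=\frac{N-2}{2N}\,\|u\|^2-\frac{N-q}{qN}\,\lambda\,\|u\|_q^q .
\end{equation*}
This is the central algebraic identity; everything else is estimating the negative $\|u\|_q^q$ term.

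Next I would control $\|u\|_q^q$ in terms of $\|u\|$. Since $q<2<N$, Hölder's inequality on the compact manifold gives $\|u\|_q\le V(M)^{\frac1q-\frac1N}\|u\|_N$, hence $\|u\|_q^q\le V(M)^{1-\frac{q}{N}}\|u\|_N^q$, and then the Sobolev embedding together with the norm equivalence (the same $\max((1+\epsilon)K_o,A_\epsilon)$ constant and the factor $\Lambda^{-q/2}$ from passing between $\|\cdot\|$ and $\|\cdot\|_{H_2^2}$ that appear in Lemma \ref{lem1}) bounds $\|u\|_N^q$ by a constant times $\|u\|^q$. Writing the result as $\lambda\|u\|_q^q\le \lambda\,C_*\,\|u\|^q$ with $C_*=V(M)^{1-\frac2N}(\max((1+\epsilon)K_o,A_\epsilon))^{q/2}$ (up to the $\Lambda^{-q/2}$ factor), I substitute to obtain
\begin{equation*}
J_\lambda(u)\ge \frac{N-2}{2N}\,\|u\|^2-\frac{N-q}{qN}\,\lambda\,C_*\,\|u\|^q
=\|u\|^q\left(\frac{N-2}{2N}\,\|u\|^{2-q}-\frac{N-q}{qN}\,\lambda\,C_*\right).
\end{equation*}
Because $u\in M_\lambda$ forces $\|u\|\ge\rho$ and $2-q>0$, I replace $\|u\|^{2-q}$ by its lower bound $\rho^{2-q}$, so the bracket is at least $\frac{N-2}{2N}\rho^{2-q}-\frac{N-q}{qN}\lambda C_*$.

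Finally I would show this bracket is strictly positive precisely for $\lambda<\lambda_1$. Setting the bracket $>0$ and solving for $\lambda$ gives the threshold $\lambda_1$ stated in the lemma, namely the value at which $\frac{N-q}{qN}\lambda C_*=\frac{N-2}{2N}\rho^{2-q}$; the expression for $\lambda_1$ in the statement is exactly this rearrangement (with the $\Lambda^{-q/2}$ factor carried along). For $\lambda\in(0,\min(\lambda_o,\lambda_1))$ the bracket has a positive lower bound, so $J_\lambda(u)\ge A>0$ with $A=\rho^q\bigl(\frac{N-2}{2N}\rho^{2-q}-\frac{N-q}{qN}\lambda C_*\bigr)$ uniform over $M_\lambda$. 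I expect the main obstacle to be purely bookkeeping: tracking the Sobolev, Hölder, norm-equivalence constants consistently with Lemma \ref{lem1} so that the resulting threshold matches the stated $\lambda_1$ exactly, rather than any conceptual difficulty — the substitution killing the critical term is what makes the lower bound work.
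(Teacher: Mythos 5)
Your proposal is correct and follows essentially the same route as the paper: the identity $J_\lambda(u)=\frac{N-2}{2N}\|u\|^2-\lambda\frac{N-q}{Nq}\|u\|_q^q$ obtained from the constraint $Q_\lambda(u)=0$, then H\"older, Sobolev and norm equivalence to bound $\|u\|_q^q$ by a constant times $\|u\|^q$, and finally the bound $\|u\|\geq\rho$ to make the coefficient positive for $\lambda<\lambda_1$ (the paper factors out $\|u\|^2$ and uses $\|u\|^{q-2}\leq\rho^{q-2}$, which is the same step as your factoring out $\|u\|^q$). The only discrepancies are the constant-tracking ones you already flagged, which are present in the paper itself.
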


\begin{proof}
Let $u\in M_{\lambda }$, then 
\begin{equation*}
\left\Vert u\right\Vert ^{2}=\lambda \int_{M}\left\vert u\right\vert
^{q}dv_{g}+\int_{M}f(x)\left\vert u\right\vert ^{N}dv_{g}
\end{equation*}%
so on $M_{\lambda }$ the functional $J_{\lambda }$ writes

\begin{equation*}
J_{\lambda }\left( u\right) =\frac{N-2}{2N}\left\Vert u\right\Vert
^{2}-\lambda \frac{N-q}{Nq}\int_{M}\left\vert u\right\vert ^{q}dv_{g}\text{.}
\end{equation*}%
Combining H\"{o}lder and Sobolev inequalities we obtain%
\begin{equation*}
J_{\lambda }\left( u\right) \geq \frac{N-2}{2N}\left\Vert u\right\Vert
^{2}-\lambda \frac{N-q}{Nq}V(M)^{1-\frac{2}{N}}\left( \max \left( \left(
1+\epsilon \right) K_{o},A_{\epsilon }\right) \right) ^{\frac{q}{2}%
}\left\Vert u\right\Vert _{H_{2}^{2}\left( M\right) }^{q}
\end{equation*}%
and still taking account of the equivalence of the norms $\left\Vert
.\right\Vert _{H_{2}^{2}\left( M\right) }$ and $\left\Vert .\right\Vert $,
we obtain%
\begin{equation*}
J_{\lambda }\left( u\right) \geq \left( \frac{N-2}{2N}-\lambda \frac{N-q}{Nq}%
\Lambda ^{-\frac{q}{2}}V(M)^{1-\frac{2}{N}}\left( \max \left( \left(
1+\epsilon \right) K_{o},A_{\epsilon }\right) \right) ^{\frac{q}{2}}\rho
^{q-2}\right) \left\Vert u\right\Vert ^{2}
\end{equation*}%
where $\Lambda >0$ is a constant.

Hence if 
\begin{equation*}
0<\lambda <\frac{\frac{N-2}{2\left( N-q\right) }\Lambda ^{-\frac{q}{2}}}{%
V(M)^{1-\frac{2}{N}}\left( \max \left( \left( 1+\epsilon \right)
K_{o},A_{\epsilon }\right) \right) ^{\frac{q}{2}}\rho ^{q-2}}
\end{equation*}%
then%
\begin{equation*}
J_{\lambda }\left( u\right) >0\text{ }
\end{equation*}%
for any $u\in M_{\lambda }$.

Since $J_{\lambda }^{\pm }\left( u\right) \geqslant J_{\lambda }\left(
u\right) $ we have the conclusion to $J_{\lambda }^{\pm }\left( u\right) $.
\end{proof}

\begin{lemma}
\label{lem3} Let $\left( M,g\right) $ be a $n$- Riemannian manifold, $n\geq
5 $. \ The following assertions are true

(i) $\left( \nabla Q_{\lambda }(u),u\right) \leq -A<0$ ( resp. $\left(
\nabla Q_{\lambda }^{\pm }(u),u\right) \leq -A<0$ ) , for $u\in M_{\lambda }$
and any $\lambda \in \left( 0,\min \left( \lambda _{o},\lambda _{1}\right)
\right) $

(ii) The critical points of $J_{\lambda }$ ( resp. $J_{\lambda }^{\pm }$ )
are points of $M_{\lambda }$ ( resp. $M_{\lambda }^{\pm }$ ).
\end{lemma}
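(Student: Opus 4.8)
The plan is to prove both parts by direct computation, using the constraint $Q_\lambda(u)=0$ that defines $M_\lambda$ to simplify the derivative $\langle\nabla Q_\lambda(u),u\rangle$, and using the variational structure to show criticality forces points onto $M_\lambda$.

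For part (i), I would first write $Q_\lambda(u)=\|u\|^2-\lambda\int_M|u|^q\,dv_g-\int_M f|u|^N\,dv_g$, which follows from the expression for $\langle\nabla J_\lambda(u),u\rangle$ already used in Lemma~\ref{lem2}. Treating $Q_\lambda$ as a functional and pairing its gradient with $u$ amounts to computing the derivative of $t\mapsto Q_\lambda(tu)$ at $t=1$ (up to the factor $t$), which from the formula $Q_\lambda(tu)=t^2\|u\|^2-\lambda t^q\|u\|_q^q-t^N\int_M f|u|^N\,dv_g$ in Lemma~\ref{lem1} gives

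\begin{equation*}
\langle\nabla Q_\lambda(u),u\rangle=2\|u\|^2-\lambda q\int_M|u|^q\,dv_g-N\int_M f|u|^N\,dv_g.
\end{equation*}

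On $M_\lambda$ the constraint lets me eliminate $\int_M f|u|^N\,dv_g=\|u\|^2-\lambda\int_M|u|^q\,dv_g$, so the expression collapses to $(2-N)\|u\|^2+\lambda(N-q)\int_M|u|^q\,dv_g$. Since $2<N$ and $q<N$, the first term is strictly negative and the second is the only danger; **the main obstacle will be** bounding $\lambda(N-q)\int_M|u|^q\,dv_g$ away from dominating $(N-2)\|u\|^2$. I would handle this exactly as in Lemma~\ref{lem2}: apply Hölder and Sobolev together with the norm equivalence to get $\int_M|u|^q\,dv_g\le V(M)^{1-q/N}(\max((1+\epsilon)K_o,A_\epsilon))^{q/2}\Lambda^{-q/2}\|u\|^q$, and then use $\|u\|\ge\rho$ to convert $\|u\|^q=\|u\|^2\|u\|^{q-2}\ge\rho^{q-2}\|u\|^2$ in the favorable direction. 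The restriction $\lambda<\lambda_1$ is precisely calibrated so that the negative term wins, yielding a bound $\langle\nabla Q_\lambda(u),u\rangle\le -A\|u\|^2\le -A\rho^2<0$ for a suitable constant $A>0$.

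For part (ii), I would argue that any critical point $u$ of the free functional $J_\lambda$ satisfies $\langle\nabla J_\lambda(u),v\rangle=0$ for all $v$, in particular $Q_\lambda(u)=\langle\nabla J_\lambda(u),u\rangle=0$, so $u$ meets the first defining condition of $M_\lambda$. It remains to verify the norm condition $\|u\|\ge\rho$, which rules out the trivial critical point $u=0$: from $Q_\lambda(u)=0$ one has $\|u\|^2=\lambda\int_M|u|^q\,dv_g+\int_M f|u|^N\,dv_g$, and feeding the same Hölder--Sobolev estimates into the right-hand side produces an inequality of the form $\|u\|^2\le C_1\lambda\|u\|^q+C_2\|u\|^N$ with $q<2<N$; a nonzero solution is therefore bounded below in norm by a positive constant, which I identify with $\rho$. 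Thus every nontrivial critical point lies in $M_\lambda$. Finally, the identical computations with $u^+$ or $u^-$ in place of $u$ give the stated conclusions for $Q_\lambda^\pm$ and $M_\lambda^\pm$, since the sign-truncated integrals obey the same Sobolev bounds.
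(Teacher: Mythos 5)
Your part (i) follows the paper's own route almost verbatim: compute $\langle\nabla Q_\lambda(u),u\rangle=2\|u\|^2-\lambda q\|u\|_q^q-N\int_M f|u|^N dv_g$, use the constraint to collapse it to $(2-N)\|u\|^2+\lambda(N-q)\|u\|_q^q$, then control the $q$-term by H\"older--Sobolev, norm equivalence and $\|u\|\geq\rho$. One direction slip: since $1<q<2$ the exponent $q-2$ is negative, so $\|u\|\geq\rho$ gives $\|u\|^q=\|u\|^2\,\|u\|^{q-2}\leq\rho^{q-2}\|u\|^2$, not $\geq$; that $\leq$ is in fact the inequality you need, so this is only a typo, and your conclusion $\langle\nabla Q_\lambda(u),u\rangle\leq -A\rho^2$ matches the paper.

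Part (ii) is where there is a genuine gap, and also where you diverge from the paper. You read the statement literally and try to show that every nontrivial free critical point of $J_\lambda$ satisfies $\|u\|\geq\rho$, by deriving $\|u\|^2\leq C_1\lambda\|u\|^q+C_2\|u\|^N$ from $Q_\lambda(u)=0$. That deduction fails here: dividing by $\|u\|^2$ gives $1\leq C_1\lambda\|u\|^{q-2}+C_2\|u\|^{N-2}$, and because $q-2<0$ the first term tends to $+\infty$ as $\|u\|\to 0$, so the inequality is perfectly compatible with arbitrarily small nonzero $\|u\|$. The standard Nehari lower bound you are invoking requires a \emph{superlinear} perturbation $q>2$; in this paper the perturbation is sublinear ($1<q<2$), a small-norm branch of solutions is expected, and the clause $\|u\|\geq\rho$ in the definition of $M_\lambda$ exists precisely to excise it --- it cannot be recovered from $Q_\lambda(u)=0$ alone. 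What the paper actually proves under the heading (ii), and what is used later (Theorem \ref{th3}, Lemma \ref{lem6}), is the converse-flavored statement: a \emph{constrained} critical point $u$ of $J_\lambda$ on $M_\lambda$, i.e.\ one satisfying $\nabla J_\lambda(u)=\mu\nabla Q_\lambda(u)$ by Lagrange multipliers, must have $\mu=0$, because pairing with $u$ gives $0=Q_\lambda(u)=\mu\langle\nabla Q_\lambda(u),u\rangle$ while part (i) says $\langle\nabla Q_\lambda(u),u\rangle<0$; hence $\nabla J_\lambda(u)=0$ and $u$ solves the equation. You should replace your argument for (ii) by this two-line Lagrange-multiplier computation, which is the content actually needed.
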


\begin{proof}
(i) Let $u\in M_{\lambda }$, then%
\begin{equation*}
\left\Vert u\right\Vert ^{2}=\lambda \int_{M}\left\vert u\right\vert
^{q}dv_{g}+\int_{M}f(x)\left\vert u\right\vert ^{N}dv_{g}
\end{equation*}%
and 
\begin{equation*}
\left\langle \nabla Q_{\lambda }(u),u\right\rangle =2\left\Vert u\right\Vert
^{2}-\lambda q\int_{M}\left\vert u\right\vert
^{q}dv_{g}-N\int_{M}f(x)\left\vert u\right\vert ^{N}dv_{g}
\end{equation*}%
\begin{equation*}
=2\left\Vert u\right\Vert ^{2}-\lambda q\int_{M}\left\vert u\right\vert
^{q}dv_{g}-N\left( \left\Vert u\right\Vert ^{2}-\lambda \int_{M}\left\vert
u\right\vert ^{q}dv_{g}\right)
\end{equation*}%
\begin{equation*}
=\left( 2-N\right) \left\Vert u\right\Vert ^{2}+\lambda \left( N-q\right)
\left\Vert u\right\Vert _{q}^{q}\text{.}
\end{equation*}

The combination of H\"{o}lder and Sobolev inequalities allows us to write%
\begin{equation*}
\left\langle \nabla Q_{\lambda }(u),u\right\rangle \leq \left( 2-N\right)
\left\Vert u\right\Vert ^{2}+\lambda \left( N-q\right) V(M)^{1-\frac{2}{N}%
}\left( \max \left( 1+\epsilon \right) K_{o},A_{\epsilon }\right) ^{\frac{q}{%
2}}\left\Vert u\right\Vert _{H_{2}^{2}\left( M\right) }^{q}
\end{equation*}%
and since the norms $\left\Vert .\right\Vert $ and $\left\Vert .\right\Vert
_{H_{2}^{2}\left( M\right) }$ are equivalent, we get%
\begin{equation*}
\left\langle \nabla Q_{\lambda }(u),u\right\rangle \leq \left( \left(
2-N\right) +\lambda \left( N-q\right) V(M)^{1-\frac{2}{N}}\Lambda
^{q-2}\left( \max \left( 1+\epsilon \right) K_{o},A_{\epsilon }\right) ^{%
\frac{q}{2}}\rho ^{q-2}\right) \left\Vert u\right\Vert ^{2}\text{.}
\end{equation*}%
Hence if 
\begin{equation*}
0<\lambda <\lambda _{1}=\frac{\frac{N-2}{2\left( N-q\right) }\Lambda ^{-%
\frac{q}{2}}}{V(M)^{1-\frac{2}{N}}\left( \max \left( 1+\epsilon \right)
K_{o},A_{\epsilon }\right) ^{\frac{q}{2}}\rho ^{q-2}}
\end{equation*}%
then for any $u\in M_{\lambda }$. 
\begin{equation*}
\left\langle \nabla Q_{\lambda }(u),u\right\rangle <0
\end{equation*}

(ii) \ By the Lagrange multiplicators theorem we get the existence of a real
number $\mu $ such that for any $u\in M_{\lambda }$ 
\begin{equation*}
\nabla J_{\lambda }(u)=\mu \nabla Q_{\lambda }(u)
\end{equation*}%
and by testing at the point $u\in M_{\lambda }$, we obtain%
\begin{equation*}
Q_{\lambda }(u)=\left\langle \nabla J_{\lambda }(u),u\right\rangle =\mu
\left\langle \nabla Q_{\lambda }(u),u\right\rangle
\end{equation*}%
and since $\left\langle \nabla Q_{\lambda }(u),u\right\rangle <0$, we get
necessarily that $\mu =0$;

Hence for any $u\in M_{\lambda }$%
\begin{equation*}
\nabla J_{\lambda }(u)=0\text{.}
\end{equation*}%
The same computations are carried to conclude for $\left\langle \nabla
Q_{\lambda }^{\pm }(u),u\right\rangle $ and $J_{\lambda }^{\pm }$.
\end{proof}

\begin{lemma}
\label{lem4} Let ($u_{n}$)$_{n}$ be a sequence in $M_{\lambda }$ ( resp. $%
M_{\lambda }^{\pm }$ ) such that

\begin{equation*}
J_{\lambda }(u_{n})\leq c\text{ ( resp. }J_{\lambda }^{\pm }(u_{n})\leq c%
\text{ )}
\end{equation*}%
and 
\begin{equation*}
\nabla J_{\lambda }(u_{n})-\mu _{n}\nabla Q_{n}(u_{n})\rightarrow 0\text{ (
resp. }\nabla J_{\lambda }^{\pm }(u_{n})-\mu _{n}\nabla Q_{n}^{\pm
}(u_{n})\rightarrow 0\text{ ).}
\end{equation*}

Suppose that 
\begin{equation*}
c<\frac{2}{nK_{o}^{\frac{n}{4}}\max_{x\in M}f(x)^{\frac{n}{4}-1}}
\end{equation*}%
then there is a subsequence of ($u_{n}$)$_{n}$ converging strongly in $%
H_{2}^{2}(M)$.
\end{lemma}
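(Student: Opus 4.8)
The plan is to reduce the constrained Palais--Smale condition to an ordinary one for $J_\lambda$ and then to run the standard concentration argument below the first critical energy level. First I would show that $(u_n)_n$ is bounded in $H_2^2(M)$. Since $u_n\in M_\lambda$ we have $Q_\lambda(u_n)=0$, so exactly as in the proof of Lemma \ref{lem2}, $J_\lambda(u_n)=\frac{N-2}{2N}\Vert u_n\Vert^2-\lambda\frac{N-q}{Nq}\Vert u_n\Vert_q^q$. Combining this with H\"older--Sobolev, $\Vert u_n\Vert_q^q\le C\Vert u_n\Vert^q$, and using $q<2$, the hypothesis $J_\lambda(u_n)\le c$ gives $\frac{N-2}{2N}\Vert u_n\Vert^2\le c+C'\Vert u_n\Vert^q$ with $q<2$, whence $\sup_n\Vert u_n\Vert<\infty$. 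Next I would prove $\mu_n\to 0$: testing $\nabla J_\lambda(u_n)-\mu_n\nabla Q_\lambda(u_n)\to 0$ against the bounded sequence $u_n$ yields $Q_\lambda(u_n)-\mu_n\langle\nabla Q_\lambda(u_n),u_n\rangle\to 0$; since $Q_\lambda(u_n)=0$ and, by Lemma \ref{lem3}(i), $\langle\nabla Q_\lambda(u_n),u_n\rangle\le -A<0$, we conclude $\mu_n\to 0$. As $(\nabla Q_\lambda(u_n))_n$ is bounded in $(H_2^2)'$, this forces $\nabla J_\lambda(u_n)\to 0$, so $(u_n)_n$ is a genuine Palais--Smale sequence for $J_\lambda$.

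By reflexivity and Rellich--Kondrachov, after extracting a subsequence $u_n\rightharpoonup u$ weakly in $H_2^2(M)$, strongly in $L^p(M)$ for every $p<N$, and almost everywhere. Passing to the limit in $\nabla J_\lambda(u_n)\to 0$ tested against fixed $\varphi\in C^\infty(M)$ --- the lower-order and subcritical terms converging by the compact embeddings and the critical term $\int_M f|u_n|^{N-2}u_n\varphi$ by a.e. convergence together with boundedness in $L^N$ --- shows that $u$ is a weak solution, i.e. $\nabla J_\lambda(u)=0$; in particular $\Vert u\Vert^2=\lambda\Vert u\Vert_q^q+\int_M f|u|^N$.

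I would then set $v_n=u_n-u\rightharpoonup 0$ and invoke the Brezis--Lieb lemma. Since $H_2^2\hookrightarrow L^q$ is compact, $\Vert v_n\Vert_q^q\to 0$; since $H_2^2\hookrightarrow H_1^2$ is compact the lower-order contributions $\int_M a|\nabla v_n|^2$ and $\int_M b\,v_n^2$ tend to $0$, so $\Vert v_n\Vert^2=\Vert\Delta v_n\Vert_2^2+o(1)$ and $\Vert u_n\Vert^2=\Vert u\Vert^2+\Vert v_n\Vert^2+o(1)$, while $\int_M f|u_n|^N=\int_M f|u|^N+\int_M f|v_n|^N+o(1)$. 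Subtracting $\langle\nabla J_\lambda(u),u\rangle=0$ from $Q_\lambda(u_n)=0$ produces the key identity $\Vert v_n\Vert^2=\int_M f|v_n|^N+o(1)$. Applying Aubin's sharp Sobolev inequality $\Vert v_n\Vert_N^2\le (K_o+\epsilon)\Vert\Delta v_n\Vert_2^2+A_\epsilon\Vert v_n\Vert_{H_1^2(M)}^2$ to $\int_M f|v_n|^N\le (\max_M f)\Vert v_n\Vert_N^N$, and letting $\epsilon\to 0$, I obtain the dichotomy for $b:=\lim_n\Vert v_n\Vert^2$: either $b=0$, or $b\ge \beta_0:=\big(K_o^{N/2}\max_M f\big)^{-2/(N-2)}=\big(K_o^{n/4}(\max_M f)^{\,n/4-1}\big)^{-1}$, where I used $N=\frac{2n}{n-4}$.

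The main obstacle is the concluding energy comparison. Decomposing the functional gives $\lim_n J_\lambda(u_n)=J_\lambda(u)+\big(\tfrac12-\tfrac1N\big)b=J_\lambda(u)+\tfrac2n b$. If concentration occurred, i.e. $b\ge\beta_0$, then since $\lim_n J_\lambda(u_n)\le c$ one would need $J_\lambda(u)\le c-\tfrac2n\beta_0<0$, so the crux is to exclude a negative-energy weak limit carrying a full bubble. When $u=0$ this is immediate, since $J_\lambda(0)=0$ forces $\tfrac2n b\le c<\tfrac2n\beta_0$, contradicting $b\ge\beta_0$; when $u\neq 0$ one uses that $u$ is a nontrivial critical point, whose energy $J_\lambda(u)=\tfrac2n\Vert u\Vert^2-\lambda\frac{N-q}{Nq}\Vert u\Vert_q^q$ is nonnegative in the admissible range of $\lambda$, so again $c\ge\tfrac2n\beta_0$, a contradiction. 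Hence $b=0$, that is $v_n\to 0$ strongly, which is the asserted convergence $u_n\to u$ in $H_2^2(M)$. The same computations, carried out with $Q_\lambda^{\pm}$, $J_\lambda^{\pm}$ and the $\pm$ form of Lemma \ref{lem3}, settle the $M_\lambda^{\pm}$ cases.
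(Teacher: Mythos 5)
Your argument tracks the paper's proof almost step for step---boundedness of $(u_n)_n$ from the constraint identity, $\mu_n\to 0$ via Lemma \ref{lem3}, the Brezis--Lieb splitting, the identity $\Vert v_n\Vert^2=\int_M f\vert v_n\vert^N+o(1)$, and the resulting quantization $b=0$ or $b\ge\beta_0=\bigl(K_o^{n/4}(\max_M f)^{n/4-1}\bigr)^{-1}$---and all of that is fine. The gap is in the very last step, where you rule out $b\ge\beta_0$ in the case $u\neq 0$ by asserting that a nontrivial critical point of $J_\lambda$ has nonnegative energy ``in the admissible range of $\lambda$''. For $1<q<2$ this is false in general: the concave term $-\frac{\lambda}{q}\Vert u\Vert_q^q$ dominates the quadratic part near the origin, so $J_\lambda(tv)<0$ for every fixed $v\neq 0$ and $t>0$ small, and such functionals typically possess small-norm nontrivial critical points of strictly negative energy (the Ambrosetti--Brezis--Cerami phenomenon). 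From $J_\lambda(u)=\frac{2}{n}\Vert u\Vert^2-\lambda\frac{N-q}{Nq}\Vert u\Vert_q^q$ and H\"older--Sobolev you only obtain $J_\lambda(u)\ge\bigl(\frac{2}{n}-\lambda C\Vert u\Vert^{q-2}\bigr)\Vert u\Vert^2$, and since $q-2<0$ the bracket is positive only when $\Vert u\Vert$ is bounded away from $0$ in terms of $\lambda$. Weak convergence gives $\Vert u\Vert\le\liminf_n\Vert u_n\Vert$, not a lower bound, so nonnegativity of $J_\lambda(u)$ is not secured.

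This is precisely why the constraint set $M_\lambda$ carries the condition $\Vert u\Vert\ge\rho$ and why the paper devotes a step to showing that the weak limit $u$ itself lies in $M_\lambda$ before invoking Lemma \ref{lem2} to conclude $J_\lambda(u)>0$ (whether the paper's own verification of $\Vert u\Vert\ge\rho$ is airtight is a separate matter, but it at least identifies the needed ingredient). To close your argument you must either establish $\Vert u\Vert\ge\rho$ for the weak limit or otherwise exclude a negative-energy limit; without that, the comparison $c\ge J_\lambda(u)+\frac{2}{n}\beta_0$ does not yield $c\ge\frac{2}{n}\beta_0$, and the contradiction with the hypothesis on $c$ does not follow.
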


\begin{proof}
Let $\left( u_{n}\right) _{n}\subset M_{\lambda }$%
\begin{equation*}
J_{\lambda }(u_{n})=\frac{N-2}{2N}\left\Vert u_{n}\right\Vert ^{2}-\lambda 
\frac{N-q}{Nq}\int_{M}\left\vert u_{n}\right\vert ^{q}dv(g)
\end{equation*}
We have \ 
\begin{equation*}
J_{\lambda }(u_{n})\geq \frac{N-2}{2N}\left\Vert u_{n}\right\Vert
^{2}-\lambda \frac{N-q}{Nq}\Lambda ^{-\frac{q}{2}}V(M)^{1-\frac{q}{N}}(\max
((1+\varepsilon )K_{\circ },A_{\varepsilon }))^{\frac{q}{2}}\left\Vert
u_{n}\right\Vert ^{q}
\end{equation*}%
\begin{equation*}
J_{\lambda }(u_{n})\geq \left\Vert u_{n}\right\Vert ^{2}(\frac{N-2}{2N}%
-\lambda \frac{N-q}{Nq}\Lambda ^{-\frac{q}{2}}V(M)^{1-\frac{q}{N}}(\max
((1+\varepsilon )K_{\circ },A_{\varepsilon }))^{\frac{q}{2}}\left\Vert
u_{n}\right\Vert ^{q-2})
\end{equation*}%
with $0<\lambda <\frac{\frac{\left( N-2\right) q}{2\left( N-q\right) }%
\Lambda ^{-\frac{q}{2}}}{V(M)^{1-\frac{q}{N}}(\max ((1+\varepsilon )K_{\circ
},A_{\varepsilon }))^{\frac{q}{2}}\left\Vert u\right\Vert ^{q-2}}$ .

On the other hand, we have 
\begin{equation*}
c\geq J_{\lambda }(u_{n})
\end{equation*}%
\begin{equation*}
\geq \lbrack \frac{N-2}{2N}-\lambda \frac{N-q}{Nq}\Lambda ^{-\frac{q}{2}%
}V(M)^{1-\frac{q}{N}}(\max ((1+\varepsilon )K_{\circ },A_{\varepsilon }))^{%
\frac{q}{2}}\left\Vert u_{n}\right\Vert ^{q-2}]\left\Vert u_{n}\right\Vert
^{2}>0
\end{equation*}%
hence%
\begin{equation*}
0\leq \left\Vert u_{n}\right\Vert ^{2}\leq \frac{c}{\frac{N-2}{2N}-\lambda 
\frac{N-q}{Nq}\Lambda ^{-\frac{q}{2}}V(M)^{1-\frac{q}{N}}(\max
((1+\varepsilon )K_{\circ },A_{\varepsilon }))^{\frac{q}{2}}\rho ^{q-2}}%
<+\infty \text{.}
\end{equation*}%
So $\left( u_{n}\right) _{n}$ is bounded in $H_{2}^{2}(M)$.\ Since \ $%
H_{2}^{2}(M)$ is reflexive and the embedding $H_{2}^{2}(M)\subset \
H_{p}^{k}(M)$ \ ($k\ =0,1$; $p<N$ ) is compact and we have

. $u_{n}\rightarrow u$\ \ weakly in $H_{2}^{2}(M)$.

. $u_{n}\rightarrow u$ strongly in $\ H_{p}^{k}(M)$\ ; $p<N$.

. $u_{n}\rightarrow u$ a.e. in $M$.\newline
The Brezis-Lieb lemma allows us to write 
\begin{equation*}
\int_{M}\left\vert \Delta u_{n}\right\vert ^{2}dv_{g}=\int_{M}\left\vert
\Delta u\right\vert ^{2}dv_{g}+\int_{M}\left\vert \Delta
(u_{n}-u)\right\vert ^{2}dvg+o(1)
\end{equation*}%
and also 
\begin{equation*}
\int_{M}f(x)\left\vert u_{n}\right\vert ^{N}dv_{g}=\int_{M}f(x)\left\vert
u\right\vert ^{N}dv_{g}+\int_{M}f(x)\left\vert u_{n}-u\right\vert
^{N}dv_{g}+o(1)\text{.}
\end{equation*}%
We claim that $u\in M_{\lambda }$, indeed since

$u_{n}\rightarrow u$\ \ weakly in $H_{2}^{2}(M)$, we have for any $\phi \in
H_{2}^{2}(M)$,%
\begin{equation*}
\int_{M}\left( \Delta u_{n}\Delta \phi -a(x)\left\langle \nabla u_{n},\nabla
\phi \right\rangle +au_{n}\phi \right) \text{ }dv_{g}=
\end{equation*}

\begin{equation*}
\int_{M}\left( \Delta u\Delta \phi -a(x)\left\langle \nabla u,\nabla \phi
\right\rangle +au\phi \right) dv_{g}+o(1)
\end{equation*}%
and in particular if we let $\phi =u$, 
\begin{equation*}
\int_{M}\left( \Delta u_{n}\Delta u-a(x)\left\langle \nabla u_{n},\nabla
u\right\rangle +au_{n}u\right) dv_{g}=\left\Vert u\right\Vert ^{2}+o(1)
\end{equation*}%
and also if we put $\phi =u_{n}$, we obtain%
\begin{equation*}
\int_{M}\left( \Delta u_{n}\Delta u-a(x)\left\langle \nabla u_{n},\nabla
u\right\rangle +au_{n}u\right) dv_{g}=\left\Vert u_{n}\right\Vert ^{2}+o(1)%
\text{.}
\end{equation*}%
Now since $(u_{n})_{n}$ belongs to $M_{\lambda }$, we get%
\begin{equation*}
\int_{M}\left( \lambda \left\vert u_{n}\right\vert
^{q-2}u_{n}u+f(x)\left\vert u_{n}\right\vert ^{N-2}u_{n}u\right) dv(g)\
=\left\Vert u\right\Vert ^{2}+o(1)
\end{equation*}%
and by letting $n\rightarrow +\infty $%
\begin{equation*}
\int_{M}\left( \lambda \left\vert u_{n}\right\vert
^{q-2}u_{n}u+f(x)\left\vert u_{n}\right\vert ^{N-2}u_{n}u\right)
dv(g)\rightarrow \int_{M}\left( \lambda \left\vert u\right\vert
^{q}+f(x)\left\vert u\right\vert ^{N}\right) dv(g)\text{.}
\end{equation*}%
Hence%
\begin{equation*}
\Phi _{\lambda }(u_{n})=\Phi _{\lambda }(u)=\left\Vert u\right\Vert
^{2}-\lambda \int_{M}\left\vert u\right\vert
^{q}dv(g)-\int_{M}f(x)\left\vert u\right\vert ^{N}dv(g)=0
\end{equation*}%
and we have 
\begin{equation*}
\left\Vert u\right\Vert +o(1)=\left\Vert u_{n}\right\Vert \geq \rho \text{.}
\end{equation*}%
Consequently $u\in M_{\lambda }$.

Also we claim that $\mu _{n}\rightarrow 0$ as $n$ $\rightarrow +\infty $ in
fact testing with $u_{n}$, we get 
\begin{equation*}
\left\langle \nabla J_{\lambda }(u_{n})-\mu _{n}\nabla \Phi _{\lambda
}(u_{n}),u_{n}\right\rangle =o(1)
\end{equation*}

\begin{equation*}
=\underset{=0}{\underbrace{\left\langle \nabla J_{\lambda
}(u_{n}),u_{n}\right\rangle }}-\mu _{n}\left\langle \nabla \Phi _{\lambda
}(u_{n}),u_{n}\right\rangle =o(1)\text{.}
\end{equation*}%
hence 
\begin{equation*}
\mu _{n}\left\langle \nabla \Phi _{\lambda }(u_{n}),u_{n}\right\rangle =o(1)
\end{equation*}%
and by Lemma \ref{lem3}, we have%
\begin{equation*}
\lim \sup_{n}\left\langle \nabla \Phi _{\lambda }(u_{n}),u_{n}\right\rangle
<0\newline
\end{equation*}%
so $\mu _{n}\rightarrow 0$ as $n\rightarrow +\infty $.\newline
We are going to show now that $u_{n}\rightarrow u$ converges strongly in $%
H_{2}^{2}(M)$. First we have%
\begin{equation*}
J_{\lambda }(u_{n})-J_{\lambda }(u)
\end{equation*}%
\begin{equation}
=\frac{1}{2}\int_{M}\left( \Delta (u_{n}-u)\right) ^{2}dv_{g}-\frac{1}{N}%
\int_{M}f(x)\left\vert u_{n}-u\right\vert ^{N}dv_{g}+o(1)  \tag{10}
\label{10}
\end{equation}%
and since $u_{n}-u\rightarrow 0$\ converges weakly in $H_{2}^{2}(M)$, by
testing $\nabla J_{\lambda }(u_{n})-\nabla J_{\lambda }(u)$, we get

\begin{equation*}
\left\langle \nabla J_{\lambda }(u_{n})-\nabla J_{\lambda
}(u),u_{n}-u\right\rangle =o(1)
\end{equation*}

\begin{equation*}
=\int_{M}\left( \Delta (u_{n}-u)\right) ^{2}dv_{g}-\int_{M}f(x)\left\vert
u_{n}-u\right\vert ^{N}dv_{g}=o(1)
\end{equation*}%
that is to say 
\begin{equation}
\int_{M}\left( \Delta _{g}(u_{n}-u)\right) ^{2}dv_{g}=\int_{M}f(x)\left\vert
u_{n}-u\right\vert ^{N}dv_{g}+o(1)  \tag{11}  \label{11}
\end{equation}%
hence taking account of (\ref{10}), we obtain$\ $

\begin{equation*}
J_{\lambda }(u_{n})-J_{\lambda }(u)=\frac{2}{n}\int_{M}\left( \Delta
(u_{n}-u)\right) ^{2}dv_{g}+o\left( 1\right) .
\end{equation*}%
The Sobolev inequality allows us to write

\begin{equation*}
\left\Vert u_{n}-u\right\Vert _{N}^{2}\leq (1+\varepsilon )K_{\circ
}\int_{M}\left( \Delta (u_{n}-u)\right) ^{2}dv_{g}+o(1)
\end{equation*}%
so%
\begin{equation}
\int_{M}f(x)\left\vert u_{n}-u\right\vert ^{N}dv_{g}\leq (1+\varepsilon )^{%
\frac{n}{n-4}}\max_{x\in M}f(x)K_{\circ }^{\frac{n}{n-4}}\left\Vert \Delta
(u_{n}-u)\right\Vert _{2}^{N}+o(1)\text{.}  \tag{12}  \label{12}
\end{equation}%
Taking account of equality (\ref{11}), one writes

\begin{equation*}
o(1)\geq \left\Vert \Delta (u_{n}-u)\right\Vert _{2}^{2}-(1+\varepsilon )^{%
\frac{n}{n-4}}\max_{x\in M}f(x)K_{\circ }^{\frac{n}{n-4}}\left\Vert \Delta
(u_{n}-u)\right\Vert _{2}^{N}+o(1)
\end{equation*}

\begin{equation*}
\geq \left\Vert \Delta (u_{n}-u)\right\Vert _{2}^{2}(\left( 1-(1+\varepsilon
)^{\frac{n}{n-4}}\max_{x\in M}f(x)K_{\circ }^{\frac{n}{n-4}}\left\Vert
\Delta (u_{n}-u)\right\Vert _{2}^{N-2}\right) +o(1)\text{.}
\end{equation*}%
Hence if 
\begin{equation*}
\lim \sup_{n}\left\Vert \Delta (u_{n}-u)\right\Vert _{2}^{N-2}<\frac{1}{%
\left( (1+\varepsilon )K_{o}\right) ^{\frac{n}{n-4}}\max_{x\in M}f(x)}
\end{equation*}%
we get 
\begin{equation*}
\frac{2}{n}\int_{M}\left\vert \Delta _{g}(u_{n}-u)\right\vert ^{2}dv(g)<c%
\text{.}
\end{equation*}%
Since 
\begin{equation*}
c<\ \frac{2}{n\text{ }K_{\circ }^{\frac{n}{4}}(\max \text{ }_{x\in M}f(x))^{%
\frac{n-4}{4}}}
\end{equation*}%
it follows that 
\begin{equation*}
\int_{M}\left( \Delta (u_{n}-u)\right) ^{2}dv_{g}<\frac{1}{K_{\circ }^{\frac{%
n}{4}}(\max_{x\in M}f(x))^{\frac{n-4}{4}}}\text{.}
\end{equation*}%
Consequently 
\begin{equation*}
o(1)\geq \left\Vert \Delta (u_{n}-u)\right\Vert _{2}^{2}\underbrace{%
(1-(1+\varepsilon )^{\frac{n}{n-4}}\max_{x\in M}f(x)K_{\circ }^{\frac{n}{n-4}%
}\left\Vert \Delta (u_{n}-u)\right\Vert _{2}^{N-2})}_{>0\newline
}+o(1)
\end{equation*}%
\ \ \ \ \ \ \ \ \ \ \ \ \ \ \ \ \ \ \ \ \ \ \ \ \ \ \ \ \ \ \ \ \ \ \ \ \ \
\ \ \ \ \ \ \ \ \ \ \ \ \ \ \ \ \ \ \ \ \ \ \ which shows that $\ $%
\begin{equation*}
\left\Vert \Delta (u_{n}-u)\right\Vert _{2}^{2}=o(1)
\end{equation*}%
and $u_{n}\rightarrow u$ converges strongly in $H_{2}^{2}(M)$.
\end{proof}

\begin{theorem}
\label{th3} Let $\left( M,g\right) $ be an $n$-dimensional compact
Riemannian manifold with $n\geqslant 6$ and $f$ be a smooth positive
function. Assume that the operator $P(u)=\Delta ^{2}u+\bigtriangledown
^{i}\left( a(x)\bigtriangledown _{i}u\right) +b(x)u$ is coercive and 
\begin{equation*}
c<\frac{2}{nK_{o}^{\frac{n}{4}}\max_{x\in M}f(x)^{\frac{n}{4}-1}}
\end{equation*}

Then there exists a $\Lambda >0$ such that for any $\lambda \in \left(
0,\Lambda \right) $, equation (\ref{1}) have a non trivial weak solution.
\end{theorem}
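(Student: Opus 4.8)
The plan is to obtain the solution as a minimizer of $J_{\lambda}$ over the constraint manifold $M_{\lambda}$, at the level $c=\inf_{u\in M_{\lambda}}J_{\lambda}(u)$. First I would fix $\Lambda=\min(\lambda_{o},\lambda_{1})$, so that by Lemma \ref{lem1} the set $M_{\lambda}$ is nonempty for every $\lambda\in(0,\Lambda)$, while by Lemma \ref{lem2} one has $J_{\lambda}(u)\geq A>0$ for every $u\in M_{\lambda}$. In particular the infimum $c$ is well defined and satisfies $c\geq A>0$, and the standing hypothesis $c<\tfrac{2}{nK_{o}^{n/4}\max_{x\in M}f(x)^{n/4-1}}$ places $c$ strictly below the level at which the Palais--Smale condition was established in Lemma \ref{lem4}.

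Next I would manufacture a Palais--Smale sequence at the level $c$. Since Lemma \ref{lem3}(i) gives $\langle\nabla Q_{\lambda}(u),u\rangle\leq-A<0$ on $M_{\lambda}$, the differential of the constraint $Q_{\lambda}=0$ never vanishes there; hence $M_{\lambda}$ is a genuine $C^{1}$ Banach manifold and the Lagrange multiplier rule is available. Applying Ekeland's variational principle to the restriction $J_{\lambda}|_{M_{\lambda}}$ produces a minimizing sequence $(u_{n})\subset M_{\lambda}$ with $J_{\lambda}(u_{n})\to c$ and $\nabla J_{\lambda}(u_{n})-\mu_{n}\nabla Q_{\lambda}(u_{n})\to0$ for suitable multipliers $\mu_{n}$. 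This is precisely the configuration assumed in Lemma \ref{lem4}.

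Then, invoking Lemma \ref{lem4} and using that $c$ lies below the critical threshold, I would extract a subsequence converging strongly in $H_{2}^{2}(M)$ to a limit $u$. Strong convergence transfers both the constraint and the energy: $u\in M_{\lambda}$, so that $\Vert u\Vert\geq\rho>0$ and $u$ is nontrivial, and $J_{\lambda}(u)=c$. The computation inside Lemma \ref{lem4} moreover forces $\mu_{n}\to0$, so passing to the limit in $\nabla J_{\lambda}(u_{n})-\mu_{n}\nabla Q_{\lambda}(u_{n})\to0$ yields $\nabla J_{\lambda}(u)=0$; equivalently, by Lemma \ref{lem3}(ii) the constrained critical point $u$ is automatically a free critical point of $J_{\lambda}$. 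Thus $u$ is a nontrivial weak solution of equation (\ref{1}).

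I expect the main obstacle to be the passage from ``infimum over $M_{\lambda}$'' to ``Palais--Smale sequence for the unconstrained functional $J_{\lambda}$'': one must verify that the constraint is regular, that the multiplier rule applies along the Ekeland sequence, and that the multipliers $\mu_{n}$ are controlled so that the limiting equation is $\nabla J_{\lambda}(u)=0$ rather than a genuinely constrained criticality. Once this delicate step is secured, the compactness below the threshold furnished by Lemma \ref{lem4} and the identification of constrained with free critical points furnished by Lemma \ref{lem3}(ii) carry out the rest of the argument essentially automatically.
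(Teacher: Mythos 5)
Your proposal follows the same route as the paper: minimize $J_{\lambda}$ over the constraint set $M_{\lambda}$, use Lemmas \ref{lem2}, \ref{lem3} and \ref{lem4} to obtain a minimizer $v\in M_{\lambda}$ below the compactness threshold, and then kill the Lagrange multiplier via $\left\langle \nabla Q_{\lambda }(v),v\right\rangle <0$ to conclude $\nabla J_{\lambda }(v)=0$. The only difference is that you spell out the Ekeland/Palais--Smale machinery that the paper leaves implicit, which is a faithful elaboration rather than a different argument.
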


\begin{proof}
According to Lemma \ref{lem2}, Lemma \ref{lem3} and Lemma \ref{lem4}, we
infer the existence of $v\in M_{\lambda }$ such that $J_{\lambda
}(v)=\min_{u\in M_{\lambda }}J_{\lambda }$. So there is a real $\mu $ such
that 
\begin{equation*}
\bigtriangledown J_{\lambda }(v)=\mu \bigtriangledown Q_{\lambda }(v)
\end{equation*}%
and multiplying by $v$ and taking account of Lemma\ref{lem3} we obtain that $%
\mu =0$. Hence $v$ is a non solution of equation (\ref{1}).
\end{proof}

\section{Multiplicity of solutions in case of constant coefficients}

When $P_{g}$ has constant coefficients, we set

\begin{equation*}
J_{\lambda }^{+}(u)=\frac{1}{2}\left( \left\Vert \Delta u\right\Vert
_{2}^{2}-\alpha \int_{M}\left\vert \nabla u\right\vert ^{2}dv_{g}+\beta
\int_{M}u^{2}dv_{g}\right) -\frac{\lambda }{q}\int_{M}u^{+q}dv_{g}\text{\ \
\ }
\end{equation*}%
\begin{equation*}
-\frac{1}{N}\int_{M}f(x)u^{+N}dv_{g}
\end{equation*}%
where 
\begin{equation*}
u^{+}=\max \left( u,0\right) \text{ }
\end{equation*}%
Critical points of $J_{\lambda }^{+}$ are solutions to%
\begin{equation}
\Delta ^{2}u+\alpha \Delta u+\beta u=\lambda \left( \left( u^{+}\right)
^{q-2}+f\left( u^{+}\right) ^{N-2}\right) u^{+}\text{.}  \tag{13}  \label{13}
\end{equation}%
Similar arguments as the ones used in the precedent sections give that $%
J_{\lambda }^{+}$ has a critical point $u$. Standard arguments show that $u$
is of class $C^{4,\theta }$ with $\theta \in \left( 0,1\right) $. If $\alpha
^{2}-4\beta >0$, we let $x_{1}=\frac{\alpha -\sqrt{\alpha ^{2}-4\beta }}{2}$
and $x_{2}=\frac{\alpha +\sqrt{\alpha ^{2}-4\beta }}{2}$ \ and moreover if $%
\alpha >0$, then $x_{1}$, $x_{2}>0$ and

\begin{equation*}
\left( \Delta +x_{1}\right) \left( \Delta +x_{2}\right) u=\Delta
^{2}u+\alpha \Delta u+\beta u\geq 0\text{.}
\end{equation*}%
Applying the maximum principle twice, we obtain that $u$ is a positive
solution of class $C^{4,\theta }$, where $\theta \in (0,1)$ of the equation%
\begin{equation*}
\Delta ^{2}u+\alpha \Delta u+\beta u=\lambda \left( u^{q-1}+fu^{N-1}\right) 
\text{.}
\end{equation*}%
and standard regularity results give that $u$ is smooth.

In the same manner if we set

\begin{equation*}
J_{\lambda }^{-}(u)=\frac{1}{2}\left( \left\Vert \Delta u\right\Vert
_{2}^{2}-\alpha \int_{M}\left\vert \nabla u\right\vert ^{2}dv_{g}+\beta
\int_{M}u^{2}dv_{g}\right) -\frac{\lambda }{q}\int_{M}\left\vert
u^{-q}\right\vert dv_{g}\text{\ \ \ }
\end{equation*}%
\begin{equation*}
-\frac{1}{N}\int_{M}f(x)\left\vert u^{-}\right\vert ^{N}dv_{g}
\end{equation*}%
where 
\begin{equation*}
u^{-}=\min \left( u,0\right) \text{ }
\end{equation*}%
then the critical points of $J_{\lambda }^{-}$ are solutions to%
\begin{equation*}
\Delta ^{2}u+\alpha \Delta u+\beta u=\lambda \left( \left\vert
u^{-}\right\vert ^{q-2}+f\left\vert u^{-}\right\vert ^{N-2}\right) u^{-}%
\text{.}
\end{equation*}%
By the \ same argument as above we get that $u^{-}$ is a negative smooth
solution. Similar arguments as the ones we used for $J_{\lambda }$ give that 
$J_{\lambda }^{+}$ and $J_{\lambda }^{-}$ have critical points $M_{\lambda
}^{+}$ and $M_{\lambda }^{-}$ respectively where 
\begin{equation*}
M_{\lambda }^{\pm }=\left\{ u\in H_{2}:Q_{\lambda }^{\pm }\left( u\right) =0%
\text{ and }\left\Vert u\right\Vert \geq \rho >0\right\}
\end{equation*}%
and%
\begin{equation*}
Q_{\lambda }^{\pm }\left( u\right) =\left\langle \bigtriangledown J_{\lambda
}^{\pm }(u),u\right\rangle \text{.}
\end{equation*}

Summarizing, we get

\begin{theorem}
\label{th4} Let $\left( M,g\right) $ be an $n$-dimensional compact
Riemannian manifold with $n\geqslant 6$. Assume that the operator $%
P(u)=\Delta ^{2}u+\bigtriangledown ^{i}\left( a(x)\bigtriangledown
_{i}u\right) +b(x)u$ is coercive and 
\begin{equation*}
c<\frac{2}{nK_{o}^{\frac{n}{4}}\max_{x\in M}f(x)^{\frac{n}{4}-1}}\text{.}
\end{equation*}

If moreover $\frac{\alpha ^{2}}{4}>\beta $ with $\alpha >0$. Then equation (%
\ref{5}) has two distinct smooth solutions; one positive and the other
negative.
\end{theorem}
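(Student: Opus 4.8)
The plan is to prove Theorem~\ref{th4} by running the minimization argument of Theorem~\ref{th3} separately on the two constrained functionals $J_{\lambda}^{+}$ and $J_{\lambda}^{-}$, and then upgrading the resulting weak critical points to genuinely signed, smooth classical solutions. The first thing I would verify is that the entire machinery of Section~2 — namely Lemmas~\ref{lem1}, \ref{lem2}, \ref{lem3} and \ref{lem4} in their $\pm$ versions — applies verbatim to $J_{\lambda}^{\pm}$ on the manifolds $M_{\lambda}^{\pm}$. This is already flagged in the statements of those lemmas, where the inequalities $Q_{\lambda}^{\pm}(u)\geq Q_{\lambda}(u)$ and $J_{\lambda}^{\pm}(u)\geq J_{\lambda}(u)$ guarantee nonemptiness of $M_{\lambda}^{\pm}$, the positivity lower bound $J_{\lambda}^{\pm}\geq A>0$, the strict negativity $\langle\nabla Q_{\lambda}^{\pm}(u),u\rangle\leq -A<0$, and the compactness of Palais--Smale sequences below the threshold $c<\tfrac{2}{n K_{o}^{n/4}\max_{M}f^{\,n/4-1}}$. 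Granting these, the direct method yields minimizers $u^{+}\in M_{\lambda}^{+}$ and $u^{-}\in M_{\lambda}^{-}$ with $J_{\lambda}^{\pm}(u^{\pm})=\min_{M_{\lambda}^{\pm}}J_{\lambda}^{\pm}$, and the Lagrange-multiplier computation of Lemma~\ref{lem3}(ii) forces the multiplier to vanish, so that $\nabla J_{\lambda}^{\pm}(u^{\pm})=0$.

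The second step is to identify what equation these critical points solve and to show the sign is the expected one. Differentiating $J_{\lambda}^{+}$ gives that $u^{+}$ weakly satisfies
\begin{equation*}
\Delta^{2}u+\alpha\Delta u+\beta u=\lambda\bigl((u^{+})^{q-2}+f(u^{+})^{N-2}\bigr)u^{+},
\end{equation*}
as recorded in~(\ref{13}). The right-hand side is supported on $\{u>0\}$ and is nonnegative there, so writing $P_{g}u=(\Delta+x_{1})(\Delta+x_{2})u$ with the real factorization $x_{1,2}=\tfrac{\alpha\mp\sqrt{\alpha^{2}-4\beta}}{2}$ — available precisely because $\tfrac{\alpha^{2}}{4}>\beta$ — we have $(\Delta+x_{1})(\Delta+x_{2})u\geq 0$. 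Here the hypotheses $\alpha>0$ and $\alpha^{2}>4\beta$ make both $x_{1},x_{2}>0$, which is exactly the structural condition under which the maximum principle applies to each first-order elliptic factor $(\Delta+x_{i})$. Applying the maximum principle twice in succession then yields $u\geq 0$, whence $u^{+}=u$ and $u$ solves $\Delta^{2}u+\alpha\Delta u+\beta u=\lambda(u^{q-1}+fu^{N-1})$; the analogous argument with $J_{\lambda}^{-}$ produces a nonpositive $u^{-}$. Both are nontrivial since the constraint enforces $\|u^{\pm}\|\geq\rho>0$.

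The final, routine step is regularity: a weak $H_{2}^{2}$ solution of a subcritical-plus-critical fourth-order elliptic equation is promoted by standard elliptic bootstrap (Sobolev embeddings feeding into $L^{p}$-regularity for $\Delta^{2}$, then Schauder estimates) to a $C^{4,\theta}$ solution for some $\theta\in(0,1)$, and thence to a smooth one. I expect the genuine obstacle to lie not in the regularity or the direct-method minimization, but in the double application of the maximum principle: one must justify that nonnegativity of $(\Delta+x_{1})(\Delta+x_{2})u$ together with $x_{1},x_{2}>0$ actually propagates to $u\geq 0$ on the closed manifold $M$. On a compact manifold without boundary the maximum principle for $(\Delta+x)$ with $x>0$ does hold — if $(\Delta+x)w\geq 0$ then $w\geq 0$, since at an interior minimum of $w$ one has $\Delta w\leq 0$ (recall $\Delta=-\operatorname{div}\nabla$ here) forcing $xw\geq(\Delta+x)w\geq 0$ — so setting $w=(\Delta+x_{2})u$ gives $w\geq 0$ and a second application gives $u\geq 0$. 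Care is needed with the sign convention on $\Delta$ and with the strictness required to exclude $u\equiv 0$, but the constraint bound $\|u\|\geq\rho$ resolves the latter, and the theorem follows.
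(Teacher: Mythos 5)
Your proposal follows essentially the same route as the paper: obtain critical points of $J_{\lambda}^{\pm}$ by the constrained minimization of Section~2 on $M_{\lambda}^{\pm}$ (killing the Lagrange multiplier via Lemma~\ref{lem3}), factor $\Delta^{2}+\alpha\Delta+\beta=(\Delta+x_{1})(\Delta+x_{2})$ with $x_{1},x_{2}>0$, apply the maximum principle twice to fix the sign, and bootstrap regularity. Your explicit check of the maximum principle on a closed manifold under the convention $\Delta=-\operatorname{div}\nabla$, and your use of $\Vert u\Vert\geq\rho$ to rule out the trivial solution, are welcome details that the paper leaves implicit.
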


\begin{lemma}
\label{lem6} For any $\lambda >0$, sufficiently small, $J_{\lambda }$ has
two local minima.
\end{lemma}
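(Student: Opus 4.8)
The plan is to exhibit the two local minima as a positive solution $u^{+}>0$ and a negative solution $u^{-}<0$, both of negative energy, produced by the potential well that the concave term ($q<2$) digs around the origin. First I would localise on a small closed ball $\overline{B_{\rho}}=\{\,u:\Vert u\Vert\le\rho\,\}$. For $\phi\ge 0$ and $t>0$ small, $J_{\lambda}^{+}(t\phi)=\tfrac{t^{2}}{2}\Vert\phi\Vert^{2}-\tfrac{\lambda t^{q}}{q}\Vert\phi\Vert_{q}^{q}-\tfrac{t^{N}}{N}\int_{M}f\phi^{N}\,dv_{g}<0$ since $q<2<N$, so $\inf_{\overline{B_{\rho}}}J_{\lambda}^{+}<0=J_{\lambda}^{+}(0)$. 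On the other hand, combining H\"{o}lder and Sobolev as in Lemma \ref{lem2} gives, on the sphere $\Vert u\Vert=\rho$, a bound $J_{\lambda}^{+}(u)\ge\tfrac12\rho^{2}-C_{2}\rho^{N}-\lambda C_{1}\rho^{q}\ge\tfrac14\rho^{2}-\lambda C_{1}\rho^{q}$, which is nonnegative once $\rho$ is fixed small and then $\lambda\le\rho^{\,2-q}/(4C_{1})$. Thus the infimum is strictly negative and is not attained on $\partial B_{\rho}$. Being negative, it lies below the (positive) threshold required in Lemma \ref{lem4}, so a minimising sequence delivered by Ekeland's principle is a Palais--Smale sequence at a subcritical level; the compactness argument of Lemma \ref{lem4} then yields an interior minimiser $u^{+}$, and the mirror construction over nonpositive functions yields $u^{-}$.

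Next I would confirm these are one-signed and distinct. The Euler--Lagrange equation of $J_{\lambda}^{+}$ is (\ref{13}); since $\tfrac{\alpha^{2}}{4}>\beta$ and $\alpha>0$, the operator factors as $(\Delta+x_{1})(\Delta+x_{2})$ with $x_{1},x_{2}>0$, and applying the maximum principle twice, exactly as in the paragraph preceding Theorem \ref{th4}, forces $u^{+}>0$; symmetrically $u^{-}<0$, so the two minimisers are distinct. Standard regularity places $u^{\pm}\in C^{4,\theta}(M)$, whence there is $c_{0}>0$ with $u^{+}\ge c_{0}$ and $u^{-}\le-c_{0}$ on the compact manifold $M$.

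Finally I would upgrade ``local minimum of $J_{\lambda}^{+}$'' to ``local minimum of $J_{\lambda}$''. For $v=u^{+}+w$ one has the exact identity $J_{\lambda}(v)=J_{\lambda}^{+}(v)-\tfrac{\lambda}{q}\int_{M}|v^{-}|^{q}\,dv_{g}-\tfrac1N\int_{M}f|v^{-}|^{N}\,dv_{g}$. Because $u^{+}\ge c_{0}$, the wrong-sign region $E=\{v<0\}$ is contained in $\{|w|>c_{0}\}$, whose measure is $O(\Vert w\Vert_{N}^{N})=O(\Vert w\Vert^{N})$ by Chebyshev and Sobolev; on $E$ one has $|v^{-}|\le|w|$, so H\"{o}lder gives $\int_{E}|v^{-}|^{q}\,dv_{g}\le|E|^{\,1-q/N}\Vert w\Vert_{N}^{q}=O(\Vert w\Vert^{N})$ and likewise $\int_{E}f|v^{-}|^{N}\,dv_{g}=O(\Vert w\Vert^{N})$. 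If in addition $u^{+}$ carries a coercive second variation, so that $J_{\lambda}^{+}(v)\ge J_{\lambda}^{+}(u^{+})+c\Vert w\Vert^{2}$ for small $\Vert w\Vert$, then $J_{\lambda}(v)\ge J_{\lambda}(u^{+})+c\Vert w\Vert^{2}-O(\Vert w\Vert^{N})>J_{\lambda}(u^{+})$, so that $u^{+}$ is a strict local minimum of $J_{\lambda}$; the same holds for $u^{-}$, and the lemma follows.

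The main obstacle is exactly this last passage. In the fourth-order setting the truncation $v^{+}$ need not lie in $H_{2}^{2}(M)$, so one cannot reduce to one-signed competitors, which is precisely why the argument must be routed through $J_{\lambda}^{\pm}$ and the maximum principle. Moreover, because $q<2$ a crude estimate of the opposite-sign contribution scales like $\Vert w\Vert^{q}$ and would swamp any quadratic gain; the rescue is the observation that the wrong-sign region has measure $O(\Vert w\Vert^{N})$, which pushes that contribution down to order $\Vert w\Vert^{N}=o(\Vert w\Vert^{2})$. Securing the matching coercive (quadratic) lower bound for $J_{\lambda}^{+}$ at $u^{+}$ --- equivalently, that $u^{+}$ sits on the strictly convex, local-minimum branch of the fibering map $t\mapsto J_{\lambda}^{+}(tu^{+})$ and is non-degenerate --- is the remaining delicate ingredient, and is where I would concentrate the technical work.
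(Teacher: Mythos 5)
Your construction of the two candidates is not the paper's: you dig a negative-energy well near the origin using the concave term and minimise $J_{\lambda }^{\pm }$ over a small ball $\overline{B_{\rho }}$, whereas the paper minimises $J_{\lambda }^{\pm }$ over the Nehari-type sets $M_{\lambda }^{\pm }=\{u:Q_{\lambda }^{\pm }(u)=0,\ \Vert u\Vert \geq \rho \}$, on which Lemma \ref{lem2} forces the energy to be \emph{positive}; the resulting critical points are different objects (yours have $\Vert u^{\pm }\Vert =O(\lambda ^{1/(2-q)})$, hence in general violate the constraint $\Vert u\Vert \geq \rho $ and do not lie in $M_{\lambda }$, which is what Theorem \ref{th5} later needs, since the mountain-pass paths there run inside $M_{\lambda }$). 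That alone would make your route a variant rather than an error, but the decisive problem is the final step, and you have flagged it yourself: to pass from ``local minimum of $J_{\lambda }^{+}$'' to ``local minimum of $J_{\lambda }$'' you must absorb the wrong-sign contribution, which you correctly bound by $O(\Vert w\Vert ^{N})$, into a gain of the form $J_{\lambda }^{+}(u^{+}+w)\geq J_{\lambda }^{+}(u^{+})+c\Vert w\Vert ^{2}$. Interior minimisation yields only $J_{\lambda }^{+}(u^{+}+w)\geq J_{\lambda }^{+}(u^{+})$; the coercive second variation is a non-degeneracy hypothesis that you neither prove nor can expect in general, so what your argument actually delivers is $J_{\lambda }(v)\geq J_{\lambda }(u^{+})-O(\Vert w\Vert ^{N})$, which does not establish a local minimum. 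This is a genuine gap, not a technicality.

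The paper closes exactly this gap by a different mechanism that uses no second-variation information. Arguing by contradiction, it takes $w_{n}\in M_{\lambda }$ with $w_{n}\rightarrow v_{1}$ in $H_{2}^{2}$ and $J_{\lambda }(w_{n})<J_{\lambda }^{+}(v_{1})$, and \emph{chooses} $w_{n}$ to minimise $J_{\lambda }$ on $B_{n}\cap M_{\lambda }$ with $B_{n}$ a shrinking ball around $v_{1}$. The two-multiplier identity $\nabla J_{\lambda }(w_{n})=\lambda _{n}\nabla Q_{\lambda }(w_{n})+\mu _{n}\left( \Delta ^{2}w_{n}+\alpha \Delta w_{n}+\beta w_{n}\right) $ turns $w_{n}$ into a solution of a fourth-order equation with the constant-coefficient operator on the left; elliptic regularity gives $w_{n}\in C^{4,\theta }$ and convergence to $v_{1}$ in the $C^{2}$ topology, so $w_{n}>0$ for $n$ large because $v_{1}>0$. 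But then $J_{\lambda }(w_{n})=J_{\lambda }^{+}(w_{n})\geq J_{\lambda }^{+}(v_{1})$, contradicting the assumed inequality. Positivity of nearby constrained minimisers, obtained through regularity and uniform convergence, is what replaces your coercivity assumption. To salvage your route you would have to either prove non-degeneracy of $u^{+}$ (not available) or import this bootstrap-to-$C^{2}$ argument, at which point you are essentially back to the paper's proof.
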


\begin{proof}
We follow closely the proof of Lemma 8 in \cite{2}. As a consequence of
Lemma \ref{lem2}, Lemma \ref{lem3} and Lemma \ref{lem5}, we infer the
existence of $v_{1}\in M_{\lambda }^{+}$ and a $v_{2}\in M_{\lambda }^{-}$
such that 
\begin{equation*}
J_{\lambda }^{+}(v_{1})=\min_{u\in M_{\lambda }^{+}}J_{\lambda }^{+}(u)
\end{equation*}%
and%
\begin{equation*}
J_{\lambda }^{-}(v_{2})=\min_{u\in M_{\lambda }^{-}}J_{\lambda }^{-}(u)\text{%
.}
\end{equation*}%
Note that $v_{1}$ and $v_{2}$ are respectively smooth positive and negative
solutions of equation (\ref{5}). Indeed by Lagrange mutiplicators theorem we
get that 
\begin{equation*}
\bigtriangledown J_{\lambda }^{+}\left( v_{1}\right) =\mu \bigtriangledown
Q_{\lambda }^{+}(v_{1})
\end{equation*}%
and multiplying by $v_{1}$ we deduce that%
\begin{equation*}
\mu =0
\end{equation*}%
Hence $v_{1}$ is a solution of (\ref{13}) and as in section 3 we get that $%
v_{1}$ positive, hence a positive solution of equation ( \ref{5}). $v_{2}$
is actually a negative solution of (\ref{5}). We claim that $v_{1}$ and $%
v_{2}$ are local minima of $J_{\lambda }$ if it is not the case let $%
w_{n}\in M_{\lambda }$ such that $w_{n}\rightarrow v_{1}$ in $H_{2}^{2}$ as $%
n\rightarrow +\infty $ and 
\begin{equation}
J_{\lambda }(w_{n})<J_{\lambda }^{+}\left( v_{1}\right)  \tag{14}  \label{14}
\end{equation}%
We can choose $w_{n}$ as 
\begin{equation}
J_{\lambda }(w_{n})=\inf_{u\in B_{n}\cap M_{\lambda }}J_{\lambda }(u) 
\tag{15}  \label{15}
\end{equation}%
where $B_{n}=\left\{ u\in \left\Vert u-v_{1}\right\Vert _{H_{1}^{2}}\leq 
\frac{1}{n}\right\} $. There exist parameters $\lambda _{n}$ and $\mu _{n}$
such that%
\begin{equation}
\bigtriangledown J_{\lambda }(w_{n})=\lambda _{n}\bigtriangledown Q_{\lambda
}(w_{n})+\mu _{n}\left( \Delta ^{2}w_{n}+\alpha \Delta w_{n}+\beta
w_{n}\right)  \tag{16}  \label{16}
\end{equation}%
with $\mu _{n}\leq 0$. Taking the inner product of the latter equality with $%
w_{n}$, we get%
\begin{equation*}
\lambda _{n}\left\langle \bigtriangledown Q_{\lambda
}(w_{n}),w_{n}\right\rangle +\mu _{n}\left\Vert w_{n}\right\Vert
_{H_{1}^{2}}^{2}=0
\end{equation*}%
and we infer that $\lambda _{n}\leq 0$.

Equation reads as 
\begin{equation*}
\Delta ^{2}w_{n}+\alpha \Delta w_{n}+\beta w_{n}=\frac{\left( -\lambda
_{n}-\mu _{n}\right) }{\left( 1-\lambda _{n}-\mu _{n}\right) }f\left\vert
w_{n}\right\vert ^{N-2}w_{n}\text{.}
\end{equation*}%
By standard methods, $w_{n}$ is of class $C^{4,\theta }$ , $0<\theta <1$.
Hence $w_{n}$ goes to $v_{1}$ in the $C^{2}$ topology, then $w_{n}>0$. So (%
\ref{15}) is a contradiction with (\ref{14}). Hence $v_{1}$ and $v_{2}$ are
respectively positive and negative solution of equation (\ref{5}) of minimal
positive energy.
\end{proof}

Next we prove

\begin{theorem}
\label{th5} Let $\left( M,g\right) $ be an $n$-dimensional compact
Riemannian manifold with $n\geqslant 6$. Assume that the operator $%
P(u)=\Delta ^{2}u+\bigtriangledown ^{i}\left( a(x)\bigtriangledown
_{i}u\right) +b(x)u$ is coercive and 
\begin{equation*}
c<\frac{2}{nK_{o}^{\frac{n}{4}}\max_{x\in M}f(x)^{\frac{n}{4}-1}}\text{.}
\end{equation*}

If moreover $\frac{\alpha ^{2}}{4}>\beta $ with $\alpha >0$. Then equation (%
\ref{5}) has third solution $w$ distinct of $u^{+}$ and $u^{-}$.
\end{theorem}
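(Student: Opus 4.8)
The plan is to produce $w$ as a mountain-pass critical point of $J_\lambda$ constrained to the manifold $M_\lambda$, lying strictly above the two local minima furnished by Lemma \ref{lem6}. Write $v_1 = u^+$ and $v_2 = u^-$ for the positive and negative solutions of Theorem \ref{th4}; since $u^- \equiv 0$ for $v_1$ and $u^+ \equiv 0$ for $v_2$, both lie in $M_\lambda$, and by Lemma \ref{lem6} they are local minima of $J_\lambda$ on $M_\lambda$. I would then introduce
\[ \Gamma = \{ \gamma \in C([0,1], M_\lambda) : \gamma(0) = v_1, \ \gamma(1) = v_2 \} \]
and the minimax level
\[ c = \inf_{\gamma \in \Gamma} \ \max_{t \in [0,1]} J_\lambda(\gamma(t)). \]

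First I would establish the mountain-pass geometry. Because $v_1$ and $v_2$ are strict local minima of $J_\lambda$ on $M_\lambda$ and are separated there -- any path joining the sign-definite functions $v_1$ and $v_2$ must pass through sign-changing elements of $M_\lambda$ -- a standard deformation argument gives $c > \max(J_\lambda(v_1), J_\lambda(v_2))$. This strict inequality is precisely what will force the critical point $w$ to be distinct from both $u^+$ and $u^-$.

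Next comes the compactness. By Lemma \ref{lem4}, $J_\lambda$ satisfies the Palais-Smale condition on $M_\lambda$ at every level $c$ lying below the threshold $\tfrac{2}{nK_o^{n/4}(\max_M f)^{n/4-1}}$, while Lemma \ref{lem3}(i) forces the Lagrange multiplier to vanish, so that a Palais-Smale limit is a genuine critical point of $J_\lambda$, hence a solution of (\ref{5}). The decisive point is therefore to verify that the minimax level $c$ sits below this threshold. Under the standing hypothesis of the theorem this bound is assumed outright; to realize it concretely, as in Theorem \ref{th1}, I would exhibit a distinguished path in $\Gamma$ built from standard Aubin-Talenti bubbles $u_\varepsilon = (\varepsilon/(\varepsilon^2 + r^2))^{(n-4)/2}$, truncated and transplanted to the point $x_0$ where $f$ is maximal, joined to $v_1$ and $v_2$ and radially projected onto $M_\lambda$. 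Expanding $J_\lambda$ along this path in powers of $\varepsilon$ and using the sharp constant $K_o$, the leading term equals the threshold while the subleading term carries the combination $\tfrac{n(n^2+4n-20)}{2(n-2)(n^2-4)} S_g(x_0) + \tfrac{n(n-6)}{(n-2)(n^2-4)} a(x_0) - \tfrac{n}{8(n-2)} \tfrac{\Delta f(x_0)}{f(x_0)}$ of Theorem \ref{th1}; its positivity makes the correction strictly negative and pushes $c$ below the threshold, the subcritical perturbation $\lambda |u|^{q-2}u$ only lowering the level further.

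The hardest part will be this energy expansion: one must compute the $\varepsilon$-asymptotics of $\|\Delta u_\varepsilon\|_2^2$, $\int_M a|\nabla u_\varepsilon|^2 dv_g$, $\int_M b u_\varepsilon^2 dv_g$ and $\int_M f u_\varepsilon^N dv_g$ on the curved background, tracking the curvature, drift and $\Delta f$ corrections with the correct coefficients, and control the error introduced by the radial projection onto $M_\lambda$. Once $c < \tfrac{2}{nK_o^{n/4}(\max_M f)^{n/4-1}}$ is secured, the mountain-pass theorem on the complete $C^1$ manifold $M_\lambda$ produces a critical point $w$ at level $J_\lambda(w) = c$; by Lemma \ref{lem3}(ii) it solves equation (\ref{5}), and since $J_\lambda(w) = c > \max(J_\lambda(u^+), J_\lambda(u^-))$ it is distinct from $u^+$ and $u^-$. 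Standard elliptic regularity, exactly as in Section 3, then upgrades $w$ to a smooth solution, which completes the proof.
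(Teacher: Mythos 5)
Your proposal follows essentially the same route as the paper: a mountain-pass argument for $J_\lambda$ on the constrained manifold $M_\lambda$ between the two local minima $u^+$ and $u^-$ supplied by Lemma \ref{lem6}, with the Palais--Smale condition of Lemma \ref{lem4} below the threshold $\frac{2}{nK_o^{n/4}(\max_M f)^{n/4-1}}$ and Lemma \ref{lem3} used to kill the Lagrange multiplier. In fact your write-up is more careful than the paper's own one-paragraph proof, which asserts the mountain-pass geometry without the strict separation of levels you use to guarantee $w\neq u^{\pm}$, and which simply assumes (as the theorem's hypothesis on $c$ does) rather than verifies via test functions that the minimax level lies below the compactness threshold.
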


\begin{proof}
We can suppose that the minima of $J_{\lambda }$ are realized by $u^{+}$ and 
$u^{-}$ . The geometric conditions of the Mountain pass theorem are
satisfied. If $\Gamma $ denotes the set of paths $\gamma :\left[ 0,1\right]
\rightarrow M_{\lambda }$ such that $\gamma (0)=u^{-}$ and $\gamma (1)=u^{+}$%
. Let $c_{\lambda }=\inf_{\gamma \in \Gamma }\max_{t\in \left[ 0,1\right]
}\left( J_{\lambda }\left( \gamma )\right) \right) $. By Lemma \pageref{4},
we infer that $c_{\lambda }$ is a critical level of the function $J_{\lambda
}$ with critical value $w$ and by Lemma \ref{3} $w\in M_{\lambda }$. Hence $%
w $ is solution of equation (\ref{5}) different from $u^{+}$ and $u^{-}$.
\end{proof}

\section{Test functions}

In this section we give the proof of Theorem \ref{th1} and \ref{th2}.$%
\newline
$Let $(y^{1},...,y^{n})$ be normal coordinates centred at the point $%
x_{\circ }$ where the function attains its maximum and $S(r)$ be the
geodesic sphere centred at $x_{o}$ and of radius $r$ ( $r<d$ the injectivity
radius ). Denote by $d\sigma $ the volume element of the $n-1$-dimensional
unit $S^{n-1}$.

Put 
\begin{equation*}
G(r)=\frac{1}{w_{n-1}}\dint\limits_{S(r)}\sqrt{\left\vert g(x)\right\vert }%
d\sigma
\end{equation*}%
where $w_{n-1}$ denotes the area of $S^{n-1}$ and $\left\vert
g(x)\right\vert $ the determinant of the metric $g$. An expansion of $G(r)$
in a neighborhood of $r=0$ writes as 
\begin{equation*}
G(r)=1-\frac{S_{g}(x_{\circ })}{6n}r^{2}+o(r^{2})
\end{equation*}%
where $S_{g}(x_{\circ })$ denotes the scalar curvature of $M$ at the point $%
x_{\circ }$.$\newline
$Let $B(x_{\circ },\delta )$ be the ball centred at $x_{\circ }$ and of
radius $\delta $ with $0<2\delta <d$ and let $\eta $ be a smooth function
equals to $1$ on $B\left( x_{o},\delta \right) $ and equals to $0$ on $%
M-B\left( x_{o},2\delta \right) $.$\newline
$ Put

\begin{equation*}
u_{\epsilon }(x)=(\frac{(n-4)n(n^{2}-4)\epsilon ^{4}}{f(x_{\circ })})^{\frac{%
n-4}{8}}\frac{\eta (r)}{(r^{2}+\epsilon ^{2})^{\frac{n-4}{2}}}
\end{equation*}%
where \ 
\begin{equation*}
f(x_{\circ })=\max_{x\in M}f(x)
\end{equation*}%
and $r=d(x_{\circ },.)$ is geodesic distance to the point $x_{\circ }$.%
\newline
We let, for $p-q>1$,

\begin{equation*}
I_{p}^{q}=\int_{0}^{+\infty }\frac{t^{q}}{(1+t)^{p}}dt
\end{equation*}%
which fulfills 
\begin{equation*}
I_{p+1}^{q}=\frac{p-q-1}{p}I_{p}^{q}\text{ \ \ \ and \ \ \ \ }I_{p+1}^{q+1}=%
\frac{q+1}{p-q-1}I_{p+1}^{q}\text{.}
\end{equation*}%
In the case where the dimension of the manifold $n>6$, we have

\begin{theorem}
Let $\left( M,g\right) $ be an $n$-dimensional compact Riemannian manifold
with $n>6$. If at the point $x_{o}$ where the function $f$ achieves its
maximum 
\begin{equation*}
\frac{n\left( n^{2}+4n-20\right) }{2\left( n-2\right) \left( n^{2}-4\right) }%
S_{g}\left( x_{o}\right) +\frac{n\left( n-6\right) }{\left( n-2\right)
\left( n^{2}-4\right) }a\left( x_{o}\right) -\frac{n}{8\left( n-2\right) }%
\frac{\Delta f(x_{o})}{f\left( x_{o}\right) }>0
\end{equation*}
equation (\ref{1}) have a non trivial solution of class $C^{4,\alpha }(M)$, $%
\alpha \in (0,1)$.
\end{theorem}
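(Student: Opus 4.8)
The plan is to verify the abstract hypothesis of Theorem~\ref{th3}, namely the energy bound
\begin{equation*}
c<\frac{2}{nK_{o}^{\frac{n}{4}}\max_{x\in M}f(x)^{\frac{n}{4}-1}},
\end{equation*}
by estimating the minimax level along the family of test functions $u_{\epsilon}$ introduced above. Since $u_{\epsilon}$ is a suitably normalized and truncated version of the extremal $(r^{2}+\epsilon^{2})^{-(n-4)/2}$ for the sharp Sobolev embedding $H_{2}^{2}(\mathbb{R}^{n})\hookrightarrow L^{N}(\mathbb{R}^{n})$, the strategy is to show that for $\epsilon$ small the level $c$ associated to $u_{\epsilon}$ on the manifold $M_{\lambda}$ lies strictly below the threshold $\frac{2}{nK_{o}^{n/4}f(x_{o})^{(n/4)-1}}$. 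Once this strict inequality is established, Lemma~\ref{lem4} provides the Palais--Smale compactness, Lemmas~\ref{lem2} and \ref{lem3} give the positivity and structure of $M_{\lambda}$, and Theorem~\ref{th3} then yields the nontrivial weak solution; standard elliptic regularity (as invoked in Section~3) promotes it to class $C^{4,\alpha}$.

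First I would compute the leading asymptotics as $\epsilon\to 0$ of the three basic integrals: the bilaplacian energy $\int_{M}(\Delta u_{\epsilon})^{2}\,dv_{g}$, the critical term $\int_{M}f(x)|u_{\epsilon}|^{N}\,dv_{g}$, and the lower-order terms $\int_{M}a|\nabla u_{\epsilon}|^{2}\,dv_{g}$ and $\int_{M}b\,u_{\epsilon}^{2}\,dv_{g}$. Using the geodesic-polar expansion $G(r)=1-\frac{S_{g}(x_{o})}{6n}r^{2}+o(r^{2})$ together with the Taylor expansion $f(x)=f(x_{o})+\frac{1}{2}\partial_{ij}f(x_{o})y^{i}y^{j}+o(r^{2})$, whose angular average produces the $\frac{\Delta f(x_{o})}{2n}r^{2}$ correction, the integrals reduce to explicit combinations of the beta-type integrals $I_{p}^{q}=\int_{0}^{\infty}t^{q}(1+t)^{-p}\,dt$, simplified by the recursion relations stated in the excerpt. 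The normalization constant $\left(\frac{(n-4)n(n^{2}-4)\epsilon^{4}}{f(x_{o})}\right)^{(n-4)/8}$ is chosen precisely so that the principal parts of the energy and the critical term match, leaving a normalized ratio whose value is exactly the sharp constant $K_{o}^{-1}$.

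The crux of the argument is the second-order term in $\epsilon$. After dividing out the leading behavior, the functional evaluated on $u_{\epsilon}$ expands as $c(u_{\epsilon})=\frac{2}{nK_{o}^{n/4}f(x_{o})^{(n/4)-1}}-C\,\epsilon^{2}\,\bigl[\text{geometric quantity}\bigr]+o(\epsilon^{2})$, where the bracketed quantity is a linear combination of $S_{g}(x_{o})$, $a(x_{o})$, and $\frac{\Delta f(x_{o})}{f(x_{o})}$ with coefficients dictated by the $I_{p}^{q}$ ratios. Collecting these coefficients should reproduce exactly the hypothesis
\begin{equation*}
\frac{n\left(n^{2}+4n-20\right)}{2\left(n-2\right)\left(n^{2}-4\right)}S_{g}(x_{o})+\frac{n\left(n-6\right)}{\left(n-2\right)\left(n^{2}-4\right)}a(x_{o})-\frac{n}{8\left(n-2\right)}\frac{\Delta f(x_{o})}{f(x_{o})}>0,
\end{equation*}
so that the positivity of this expression forces the $\epsilon^{2}$-correction to have the correct sign and push $c(u_{\epsilon})$ strictly below the threshold. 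The main obstacle is precisely this bookkeeping: one must track the $O(\epsilon^{2})$ contributions from the metric volume distortion, the curvature-induced correction to $\Delta u_{\epsilon}$, the Taylor expansion of $f$, and the subcritical $\lambda$-term, and confirm that the cutoff $\eta$ contributes only exponentially small errors because $u_{\epsilon}$ decays like $r^{-(n-4)}$ away from $x_{o}$ (here $n>6$ guarantees the relevant integrals converge without logarithmic corrections). The $\lambda|u|^{q-2}u$ perturbation only lowers the energy and can be absorbed for $\lambda$ small, so it does not affect the sign condition; choosing $\Lambda$ small enough to also respect the constraints $\lambda<\min(\lambda_{o},\lambda_{1})$ from the earlier lemmas completes the verification.
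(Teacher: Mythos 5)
Your proposal follows essentially the same route as the paper: evaluate $J_{\lambda}$ on the standard truncated Aubin-type test functions $u_{\epsilon}$, expand the bilaplacian energy, the critical term, and the lower-order terms to order $\epsilon^{2}$ (the paper imports these expansions from the Djadli--Hebey--Ledoux computations), discard the $\lambda$-term since it only lowers the energy, and observe that the stated geometric condition is exactly what makes the $\epsilon^{2}$-correction push the level strictly below the compactness threshold $\frac{2}{nK_{o}^{n/4}f(x_{o})^{(n-4)/4}}$ of Theorem~\ref{th3}. The plan is correct and matches the paper's argument, differing only in that you describe the bookkeeping rather than quoting the explicit expansion coefficients.
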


\begin{proof}
As in (\cite{7}), we get 
\begin{equation*}
\int_{M}f(x)\left\vert u_{\epsilon }(x)\right\vert ^{N}dv(g)=\frac{1}{%
K_{\circ }^{\frac{n}{4}}(f(x_{\circ }))^{\frac{n-4}{4}}}\left( 1-(\frac{%
\Delta f(x_{\circ })}{2(n-2)f(x_{\circ })}+\frac{S_{g}(x_{\circ })}{6(n-2)}%
)\epsilon ^{2}+o(\epsilon ^{2})\right)
\end{equation*}%
and also%
\begin{equation*}
\int_{M}a(x)\left\vert \nabla u_{\epsilon }\right\vert ^{2}dv(g)=\frac{1}{%
K_{\circ }^{\frac{n}{4}}(f(x_{\circ }))^{\frac{n-4}{4}}}\left( \frac{%
4(n-1)a(x_{\circ })}{(n^{2}-4)(n-6)}\epsilon ^{2}+o(\epsilon ^{2})\right) 
\text{.}
\end{equation*}%
The computations give%
\begin{equation*}
\int_{M}b(x)u_{\epsilon }^{2}dv(g)=o(\epsilon ^{2})
\end{equation*}%
and%
\begin{equation*}
\int_{M}\left\vert \Delta u_{\epsilon }\right\vert ^{2}dv(g)=\frac{1}{%
K_{\circ }^{\frac{n}{4}}(f(x_{\circ }))^{\frac{n-4}{4}}}\left( 1-\frac{%
n^{2}+4n-20}{6(n^{2}-4)(n-6)}S_{g}(x_{\circ })\epsilon ^{2}+o(\epsilon
^{2})\right) \text{.}
\end{equation*}%
Summarizing we obtain%
\begin{equation*}
\int_{M}\left\vert \Delta u_{\epsilon }\right\vert ^{2}-a(x)\left\vert
\nabla u_{\epsilon }\right\vert ^{2}+b(x)u_{\epsilon }^{2}dv(g)=\frac{1}{%
K_{\circ }^{\frac{n}{4}}(f(x_{\circ }))^{\frac{n-4}{4}}}
\end{equation*}%
\begin{equation*}
\left( 1-(\frac{n^{2}+4n-20}{6(n^{2}-4)(n-6)}S_{g}(x_{\circ })+\frac{4(n-1)}{%
(n^{2}-4)(n-6)}a(x_{\circ }))\epsilon ^{2}+o(\epsilon ^{2})\right) \text{.}
\end{equation*}%
Taking in mind that 
\begin{equation*}
J_{\lambda }\left( u_{\epsilon }\right) =\frac{1}{2}\left\Vert u_{\epsilon
}\right\Vert ^{2}-\frac{\lambda }{q}\left\Vert u_{\epsilon }\right\Vert
_{q}^{q}-\frac{1}{N}\int_{M}f(x)\left\vert u_{\epsilon }(x)\right\vert
^{N}dv(g)
\end{equation*}%
where 
\begin{equation*}
\left\Vert u_{\epsilon }\right\Vert ^{2}=\int_{M}\left\vert \Delta
u_{\epsilon }\right\vert ^{2}-a(x)\left\vert \nabla u_{\epsilon }\right\vert
^{2}+b(x)u_{\epsilon }^{2}dv(g)
\end{equation*}%
and since $\lambda >0$, we get

\begin{equation*}
J_{\lambda }\left( u_{\epsilon }\right) \leq \frac{1}{2}\left\Vert
u_{\epsilon }\right\Vert ^{2}-\frac{1}{N}\int_{M}f(x)\left\vert u_{\epsilon
}(x)\right\vert ^{N}dv(g)
\end{equation*}%
\begin{equation*}
\leq \frac{1}{K_{\circ }^{\frac{n}{4}}(f(x_{\circ }))^{\frac{n-4}{4}}}\left[ 
\frac{2}{n}-\left( \frac{n^{2}+4n-20}{(n^{2}-4)(n-6)}S_{g}(x_{\circ })+\frac{%
2(n-1)}{(n^{2}-4)(n-6)}a(x_{\circ })-\frac{1}{4(n-2)}\frac{\Delta f(x_{\circ
})}{f(x_{\circ })}\right) \epsilon ^{2}+o(\epsilon ^{2})\right]
\end{equation*}%
\begin{equation*}
\leq \frac{2}{n\text{ }K_{\circ }^{\frac{n}{4}}(f(x_{\circ }))^{\frac{n-4}{4}%
}}\left[ 1-\left( \frac{\left( n^{2}+4n-20\right) n}{2(n^{2}-4)(n-6)}%
S_{g}(x_{\circ })+\frac{(n-1)\text{ }n}{(n^{2}-4)(n-6)}a(x_{\circ })-\frac{n%
}{8(n-2)}\frac{\Delta f(x_{\circ })}{f(x_{\circ })}\right) \epsilon
^{2}+o(\epsilon ^{2})\right] \text{.}
\end{equation*}%
So the condition 
\begin{equation*}
J_{\lambda }\left( u_{\epsilon }\right) <\ \frac{2}{n\text{ }K_{\circ }^{%
\frac{n}{4}}(f(x_{\circ }))^{\frac{n-4}{4}}}
\end{equation*}%
is fulfilled if 
\begin{equation*}
\left( \frac{\left( n^{2}+4n-20\right) n}{2(n^{2}-4)(n-6)}S_{g}(x_{\circ })+%
\frac{(n-1)\text{ }n}{(n^{2}-4)(n-6)}a(x_{\circ })-\frac{n}{8(n-2)}\frac{%
\Delta f(x_{\circ })}{f(x_{\circ })}\right) >0\text{.}
\end{equation*}
\end{proof}

In the case $n=6$, we have

\begin{proof}
The same calculations as in case $n>6$ lead to \ 
\begin{equation}
\int_{M}f(x)\left\vert u_{\epsilon }(x)\right\vert ^{N}dv(g)=\frac{1}{%
K_{\circ }^{\frac{n}{4}}(f(x_{\circ }))^{\frac{n-4}{4}}}\left( 1-(\frac{%
\Delta f(x_{\circ })}{2(n-2)f(x_{\circ })}+\frac{S_{g}(x_{\circ })}{6(n-2)}%
)\epsilon ^{2}+o(\epsilon ^{2})\right) \text{.}  \tag{17}  \label{17}
\end{equation}%
Also the same computations as in \cite{7} with minor modifications allow us
to write%
\begin{equation*}
\int_{M}a(x)\left\vert \nabla u_{\epsilon }\right\vert ^{2}dv(g)=(n-4)^{2}(%
\frac{(n-4)n(n^{2}-4)}{f(x_{\circ })})^{\frac{n-4}{4}}\frac{w_{n-1}}{2}%
\left( a(x_{\circ })\epsilon ^{2}\log (\frac{1}{\epsilon ^{2}})\text{ }%
+O(\epsilon ^{2})\right)
\end{equation*}%
and 
\begin{equation*}
\int_{M}\left\vert \Delta u_{\epsilon }\right\vert ^{2}dv(g)=(n-4)^{2}(\frac{%
(n-4)n(n^{2}-4)}{f(x_{\circ })})^{\frac{n-4}{4}}\frac{w_{n-1}}{2}
\end{equation*}%
\begin{equation*}
\left( \frac{n(n+2)(n-2)}{(n-4)}I_{n}^{\frac{n}{2}-1}-\frac{2}{n}%
S_{g}(x_{\circ })\epsilon ^{2}\log (\frac{1}{\epsilon ^{2}})+O(\epsilon
^{2})\right) \text{.}
\end{equation*}%
Consequently%
\begin{equation*}
\int_{M}\left( \Delta u_{\epsilon }\right) ^{2}-a(x)\left\vert \nabla
u_{\epsilon }\right\vert ^{2}+b(x)u_{\epsilon }^{2}dv(g)=(n-4)^{2}(\frac{%
(n-4)n(n^{2}-4)}{f(x_{\circ })})^{\frac{n-4}{4}}\frac{w_{n-1}}{2}
\end{equation*}%
\begin{equation*}
\left[ \frac{n(n+2)(n-2)}{(n-4)}I_{n}^{\frac{n}{2}-1}-\left( \frac{2}{n}%
S_{g}(x_{\circ })+a(x_{\circ })\right) \epsilon ^{2}\log (\frac{1}{\epsilon
^{2}})+O(\epsilon ^{2})\right]
\end{equation*}%
\begin{equation*}
=\frac{1}{K_{\circ }^{\frac{n}{4}}(f(x_{\circ }))^{\frac{n-4}{4}}}\left( 1-%
\frac{\left( n-4\right) }{n\left( n^{2}-4\right) I_{n}^{\frac{n}{2}-1}}%
\left( \frac{2}{n}S_{g}(x_{\circ })+a(x_{\circ })\right) \epsilon ^{2}\log (%
\frac{1}{\epsilon ^{2}})+O(\epsilon ^{2})\text{ \ }\right)
\end{equation*}%
and taking account of (\ref{17}), we obtain%
\begin{equation*}
J_{\lambda }\left( u_{\epsilon }\right) \leq \frac{1}{2}\left\Vert
u_{\epsilon }\right\Vert ^{2}-\frac{1}{N}\int_{M}f(x)\left\vert u_{\epsilon
}(x)\right\vert ^{N}dv(g)
\end{equation*}

\begin{equation*}
\leq \frac{1}{K_{\circ }^{\frac{n}{4}}(f(x_{\circ }))^{\frac{n-4}{4}}}\left( 
\frac{1}{2}-\frac{1}{N}-\frac{\left( n-4\right) t_{o}^{2}}{2n\left(
n^{2}-4\right) I_{n}^{\frac{n}{2}-1}}\left( \frac{2}{n}S_{g}(x_{\circ
})+a(x_{\circ })\right) \epsilon ^{2}\log (\frac{1}{\epsilon ^{2}}%
)+O(\epsilon ^{2})\right) \text{.}
\end{equation*}%
So if in the point $x_{o}$ where the maximum of the function $f$ is
achieved, the condition $\frac{2}{n}S_{g}(x_{\circ })+a(x_{\circ })>0$ i.e.
since $n=6$, $S_{g}(x_{\circ })>-3a(x_{\circ })$ is fulfilled, we get for $%
\epsilon $ sufficiently small%
\begin{equation*}
J_{\lambda }\left( u_{\epsilon }\right) <\frac{2}{nK_{\circ }^{\frac{n}{4}%
}(f(x_{\circ }))^{\frac{n-4}{4}}}\text{.}
\end{equation*}
\end{proof}

\end{document}